\definecolor{aleacolour}{rgb}{0.09,0.32,0.44} 
\newcommand{\be}{\begin{enumerate}}
\newcommand{\beq}{\begin{equation}}
\newcommand{\eeq}{\end{equation}}
\newcommand{\beqs}{\begin{equation*}}
\newcommand{\eeqs}{\end{equation*}}
\newcommand{\bea}{\begin{eqnarray}}
\newcommand{\eea}{\end{eqnarray}}
\newcommand{\beas}{\begin{eqnarray*}}
\newcommand{\eeas}{\end{eqnarray*}}
\def\({\left(}
\def\){\right)}
\theoremstyle{plain}
\newtheorem{theorem}{Theorem}
\newtheorem{lemma}[theorem]{Lemma}
\newtheorem{claim}[theorem]{Claim}
\newtheorem{conjecture}[theorem]{Conjecture}
\theoremstyle{definition}
\newtheorem{definition}[theorem]{Definition}
\theoremstyle{remark}
\theoremstyle{property}
\title{\vspace{-0.9cm} Threshold Ramsey multiplicity for odd cycles}
\author{David Conlon\thanks{Department of Mathematics, California Institute of Technology, Pasadena, CA 91125. Email: \href{dconlon@caltech.edu} {\nolinkurl{dconlon@caltech.edu}}. Research supported by NSF Award DMS-2054452.}
\and 
Jacob Fox\thanks{Department of Mathematics, Stanford University, Stanford, CA 94305. Email: \href{jacobfox@stanford.edu} {\nolinkurl{jacobfox@stanford.edu}}. Research supported by a Packard Fellowship and by NSF Award DMS-1855635.}
\and 
Benny Sudakov\thanks{Department of Mathematics, ETH, 8092 Z\"urich, Switzerland. Email: \href{mailto:benjamin.sudakov@math.ethz.ch} {\nolinkurl{benjamin.sudakov@math.ethz.ch}}. Research supported by SNSF grant 200021\_196965.}
\and
Fan Wei\thanks{Department of Mathematics, Princeton University, Princeton, NJ 08540. Email: \href{mailto:fanw@princeton.edu} {\nolinkurl{fanw@princeton.edu}}.
Research supported by NSF Award DMS-1953958.
}
}
\date{}
\begin{document}
\maketitle  

\begin{abstract}
The Ramsey number $r(H)$ of a graph $H$ is the minimum $n$ such that any two-coloring of the edges of the complete graph $K_n$ contains a monochromatic copy of $H$. The threshold  Ramsey multiplicity $m(H)$  is then the minimum number of monochromatic copies of $H$ taken over all two-edge-colorings of $K_{r(H)}$. 
The study of this concept was first proposed by Harary and Prins almost fifty years ago. 
In a companion paper, the authors have shown that there is a positive constant $c$ such that the threshold Ramsey multiplicity for a path or even cycle with $k$ vertices is at least $(ck)^k$, which is tight up to the value of $c$. Here, using different methods, we show that the same result also holds for odd cycles with $k$ vertices. 
\end{abstract}

\section{Introduction}

The \emph{Ramsey number} $r(H)$ of a graph $H$ is  the minimum positive integer $n$ such that any two-coloring of the edges of the complete graph $K_n$ on $n$ vertices contains a monochromatic copy of $H$. Ramsey in 1930 proved that these numbers exist. 
However, determining or even estimating Ramsey numbers remains a formidable challenge for most graphs. For instance, the Ramsey number of $K_5$ is already not known, while the longstanding bounds $2^{k/2} \leq r(K_k) \leq 4^k$ have only been improved by lower-order factors~\cite{ConlonUp, Sah, Spencer}. 

To date, there are only a few non-trivial families of graphs for which the Ramsey number is known exactly, including stars, paths, and cycles. 
Let $P_k$ and $C_k$ denote the path and cycle on $k$ vertices, respectively. In 1967, Gerencs\'er and Gy\'arf\'as~\cite{GG} determined the Ramsey number of paths, namely,
\[r(P_k) = k  - 1+ \lfloor k/2 \rfloor.\]
For cycles, the general case was solved independently by Rosta~\cite{Ros} and by Faudree and Schelp~\cite{cycle2}, who showed that
 \[ r(C_k) = 3k/2- 1 \text{ if } k \geq 6 \text{ is even \ \ and \ \ } r(C_k)=2k-1  \text{ if } k \geq 5 \text{ is odd}.\] 

A more general problem than computing Ramsey numbers is to determine the {\it Ramsey multiplicity} $M(H, n)$, the minimum number of monochromatic copies of $H$ guaranteed in any two-edge-coloring of $K_n$. Indeed, it is easy to check that $M(H, n) = 0$ if and only if $n < r(H)$. 

The asymptotic  behaviour of $M(H, n)$ when $H$ is fixed and $n$ tends to infinity has attracted considerable attention. This is in part because of a famous conjecture of Erd\H{o}s~\cite{Erdos62} stating that if $H$ is a clique, then the value of $M(H, n)$ is asymptotically equal to the expected number of monochromatic copies of $H$ in a uniformly random two-edge-coloring of $K_n$. Unfortunately, this conjecture (and a later generalization to all graphs~\cite{BRcommon}) is false already for $H = K_4$, as first shown by Thomason~\cite{Tcommon} (see also~\cite{K4common, Sidcommon}). However, it remains an interesting open problem to determine which graphs satisfy the conjecture, known in the literature as \emph{common graphs}. For instance, the non-three-colorable $5$-wheel is known to be common~\cite{HHKNR} and some hope remains that all bipartite graphs are common because of a connection to a celebrated conjecture of Sidorenko and Erd\H{o}s--Simonovits~\cite{Sidorenko,Sidorenko2, Simon} (see~\cite{CFS, CKLL, CL20, KLL, LS, S1} for some recent results towards this conjecture). We refer the interested reader to~\cite[Section 2.6]{Ramseysurvey} for more on this fascinating subject.
 
Another much-studied problem concerns the  value of $M(H, n)$ when it first becomes positive, i.e., when $n = r(H)$. As in our companion paper \cite{CFSW}, we refer to this value as the \emph{threshold Ramsey multiplicity}. 

\begin{definition}
The \emph{threshold Ramsey multiplicity} $m(H)$ of a graph $H$ is the minimum number of monochromatic copies of $H$ in any two-coloring of the edges of  $K_n$ with $n=r(H)$. In other words, \[m(H) = M(H, r(H)).\]
\end{definition}

The threshold Ramsey multiplicity was first studied systematically by Harary and Prins \cite{HP} almost fifty years ago. The exact value of the threshold Ramsey multiplicity is known for all graphs with at most $4$ vertices~\cite{Hsurvey, HP, K4}, but, in general, determining or even providing a non-trivial lower bound on the threshold Ramsey multiplicity appears to be quite challenging. In fact, the behavior of $m(H)$ can be rather erratic. For instance, Harary and Prins \cite{HP} proved that $m(K_2) = 1$ and  $m(K_{1,k}) =1$ for $k$ even, but $m(K_{1,k})=2k$ for $k \geq 3$ odd. 

In the same paper \cite{HP}, Harary and Prins asked for a determination of $m(P_k)$ and $m(C_k)$. It is this question that concerns us in this paper and its companion~\cite{CFSW}. Indeed, in~\cite{CFSW}, not only did we provide the first non-trivial bound for the Ramsey multiplicity of paths and even cycles, but the bound is tight up to a lower-order factor. 

\begin{theorem}[\cite{CFSW}]\label{thm:even}
There is a positive constant $c$ such that, for every positive integer $k$, the threshold Ramsey multiplicity of the path with $k$ vertices satisfies $m(P_k) \geq (ck)^k$ and, if $k$ is even, the threshold Ramsey multiplicity of the cycle on $k$ vertices satisfies $m(C_k) \geq (ck)^k$.
\end{theorem}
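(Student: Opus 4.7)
The plan is to establish both bounds by a stability-plus-counting argument that exploits the rigid structure of the extremal Ramsey colorings. For even $k$, note that $r(P_k) = r(C_k) = 3k/2 - 1$; write $n$ for the relevant Ramsey number in either case. The canonical construction showing $r(P_k) > n-1$ partitions the vertex set of $K_{n-1}$ into cliques $A$ of size $k-1$ and $B$ of size $\lfloor k/2\rfloor - 1$, colors all edges inside the parts red and all edges across the parts blue. The red graph is then $P_k$-free since each component has fewer than $k$ vertices, and the blue bipartite graph is $P_k$-free since every bipartite path has at most $2(\lfloor k/2\rfloor-1)+1 < k$ vertices; for even $k$, this construction is also $C_k$-free in each color.

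The first step is to count the monochromatic copies that appear in any completion of this extremal construction to $n$ vertices. Adding one extra vertex $v$ with edges colored arbitrarily, a direct enumeration shows that the resulting coloring contains at least $(ck)^k$ monochromatic $P_k$'s, and at least $(ck)^k$ monochromatic $C_k$'s when $k$ is even. The dominant contribution comes from near-spanning paths/cycles in the complete bipartite graph $K_{k-1,\lfloor k/2\rfloor}$ between the old parts (together with those that pass through $v$), whose count is of order $\binom{k-1}{\lfloor k/2\rfloor}\lfloor k/2\rfloor!^2 \approx (k/e)^k$ by Stirling's formula. Thus the extremal coloring itself already satisfies the desired bound, giving us $c$ essentially equal to $1/e$ up to lower-order factors.

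The second step is a stability argument: I would show that any two-coloring of $K_n$ with fewer than $(ck)^k$ monochromatic $P_k$'s (resp.\ $C_k$'s for even $k$) must be close in edit distance to the extremal construction above. The key tools are the Erd\H{o}s--Gallai theorem on $P_k$-free graphs together with its stability version, which asserts that a graph with few $P_k$'s is close to a disjoint union of cliques of order less than $k$, along with a bipartite analogue characterizing dense bipartite graphs with few long paths or even cycles. A natural implementation is to delete a single vertex, observe that if the full coloring has few monochromatic copies then the resulting $(n-1)$-vertex coloring has very few, apply the stability result to each color class, and then reinsert the deleted vertex to recover the global structure. A final local perturbation calculation, showing that small edits to the extremal coloring can only increase (or roughly preserve) the number of monochromatic copies, closes the argument. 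The hardest step will be this stability analysis: standard Erd\H{o}s--Gallai stability gives structure when the edge count is close to extremal, whereas here we need stability phrased in terms of copy counts at precisely the Ramsey threshold; translating between these two regimes, and doing so uniformly for paths and for the more rigid even cycles, requires care and likely some new structural lemmas tailored to near-extremal Ramsey colorings.
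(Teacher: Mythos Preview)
This theorem is not proved in the present paper at all; it is quoted from the companion paper \cite{CFSW}, so there is no proof here to compare against. The present paper does, however, indicate that \cite{CFSW} proceeds via Szemer\'edi's regularity lemma (``As in our proof of Theorem~\ref{thm:even} in \cite{CFSW}, we will use Szemer\'edi's regularity lemma\ldots''): one passes to a reduced graph, obtains stability at the level of that bounded-size graph, finds a long monochromatic path there, and then invokes a counting lemma to produce $(ck)^k$ monochromatic copies in the original coloring.

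Your plan is different in that it avoids regularity and works directly with Erd\H{o}s--Gallai-type stability. The outline is reasonable and your counting in the near-extremal coloring is of the right order, but the proposal has a genuine gap at exactly the point you yourself flag. You need a statement of the form ``few monochromatic $P_k$'s (or $C_k$'s) in a two-coloring of $K_n$ implies the coloring is close to extremal,'' yet classical Erd\H{o}s--Gallai stability is phrased in terms of \emph{edge counts}, not \emph{copy counts}. Converting a hypothesis on copy counts into edge-count information at the Ramsey threshold is precisely the crux, and you supply no mechanism for it; ``likely some new structural lemmas tailored to near-extremal Ramsey colorings'' is an acknowledgment of the difficulty, not a resolution. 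The regularity route handles this by pushing stability down to a reduced graph of bounded size, where one can afford crude constants and invoke off-the-shelf extremal/stability results for cycles and paths; that is the idea your sketch is missing. A minor additional point: your argument is written only for even $k$, whereas the theorem also asserts $m(P_k)\ge (ck)^k$ for odd $k$, where $r(P_k)=(3k-3)/2$ and the extremal picture is slightly different.
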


In this paper, we address Harary and Prins' question for odd cycles. Unlike the cases studied in~\cite{CFSW}, this odd-cycle case has received considerable previous attention, with Rosta and Sur\'anyi \cite{RStm} already proving the exponential lower bound $m(C_k)\geq 2^{c k}$ in the 1970's. This was later improved to a superexponential bound in an unpublished work of Rosta (see~\cite{KR}). More recently, K\'arolyi and Rosta~\cite{KR} improved the lower bound to $m(C_k) \geq k^{c k}$.  To the best of our knowledge, this was the state-of-the-art prior to our result, which we now state.

\begin{theorem}\label{thm:main}
There is a positive constant $c$ such that, for every odd positive integer $k$, the threshold Ramsey multiplicity of the cycle on $k$ vertices satisfies $m(C_k) \geq (ck)^k$.
\end{theorem}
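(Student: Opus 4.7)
My plan is a stability-plus-supersaturation approach centered on the essentially unique extremal coloring certifying $r(C_k)=2k-1$ for odd $k$: on $K_{2k-2}$ one partitions the vertex set as $V_1\cup V_2$ with $|V_1|=|V_2|=k-1$, colors all edges inside each $V_i$ red, and all edges between $V_1$ and $V_2$ blue. Given any $2$-coloring of $K_{2k-1}$, I would single out one vertex $v_0$ (by averaging so that few of its incident edges are misaligned with the extremal template) and split on whether the coloring restricted to the remaining $2k-2$ vertices is within edit distance $\epsilon\binom{2k-2}{2}$ of this extremal coloring, for a small absolute constant $\epsilon>0$.

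\emph{Near-extremal case.} Here I would count monochromatic $C_k$'s through $v_0$ by analyzing the colors of its $2k-2$ incident edges. Suppose the red edges from $v_0$ split as $r_1$ into $V_1$ and $r_2$ into $V_2$; the remaining $k-1-r_i$ edges on each side are blue. If $\max(r_1,r_2)\geq k/2$, say $r_1\geq k/2$, then up to a few flipped edges the red subgraph on $V_1\cup\{v_0\}$ is a nearly-red $K_k$ and contains at least $(1-o(1))(k-1)!/2\geq (ck)^{k}$ red $C_k$'s for any $c<1/e$ and $k$ large. Otherwise $r_1,r_2<k/2$, so $v_0$ has at least $k/2$ blue edges to each of $V_1$ and $V_2$. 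Since $k$ is odd and the blue graph on $V_1\cup V_2$ is approximately bipartite, any odd $C_k$ through $v_0$ must use one blue edge from $v_0$ into each $V_i$ followed by a blue alternating path of length $k-2$ joining the two endpoints through $(V_1\cup V_2)\setminus\{v_0\}$. The number of such alternating paths between a fixed endpoint pair is
\[
\left(\frac{(k-2)!}{\bigl(\tfrac{k-1}{2}\bigr)!}\right)^{2},
\]
which by Stirling is of order $(k/2)^{k-2}$; summing over the $\Omega(k^2)$ admissible neighbor pairs at $v_0$ and losing only a bounded constant factor to flipped edges yields at least $(ck)^{k}$ blue $C_k$'s.

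\emph{Far-from-extremal case.} Here I would establish a quantitative stability statement: any $2$-coloring of $K_{2k-1}$ that is $\epsilon$-far (in edit distance) from the extremal coloring contains at least $(ck)^{k}$ monochromatic $C_k$'s. My plan is to revisit the Faudree--Schelp--Rosta proof of $r(C_k)=2k-1$, which locates a longest monochromatic path, studies its endpoint neighborhoods, and then either forces the extremal structure or reroutes to a $C_k$. Upgrading each rerouting step to a count should reveal a monochromatic subgraph on $\Omega(k)$ vertices that is either dense enough to produce $(ck)^{k}$ copies of $C_k$ via a greedy embedding along high-degree vertices, or is near-bipartite with a small odd defect which, combined with its bipartite part, still yields $(ck)^{k}$ odd cycles of length exactly $k$.

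\emph{Main obstacle.} The delicate point is the quantitative aspect of stability: turning ``$\epsilon$-far from extremal implies one monochromatic $C_k$'' into ``$\epsilon$-far implies $(ck)^{k}$ monochromatic $C_k$'s'' requires recovering an $\Omega(k)$-factor of freedom at each of $\Omega(k)$ embedding steps rather than just a single one. A potentially cleaner route is direct supersaturation: locate a monochromatic near-Hamilton path in the majority color via an Erd\H{o}s--Gallai-type extremal bound and count the many ways to close it into a $C_k$ of the exact target length, the challenge being to arrange the path so that its two endpoints enjoy enough common monochromatic neighborhood structure to produce $k^{\Theta(k)}$ closing cycles of the required length.
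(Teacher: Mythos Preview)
Your overall architecture---split into a near-extremal case and a far-from-extremal case---matches the paper's, but the execution diverges substantially and your plan has a genuine gap in the far-from-extremal case, which you yourself flag as the main obstacle.

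\textbf{Far-from-extremal case.} The paper does \emph{not} attempt to upgrade the Faudree--Schelp--Rosta argument to a count, nor does it use an Erd\H{o}s--Gallai-type longest-path approach. Both of your suggested routes run into exactly the difficulty you identify: those arguments produce one cycle by a sequence of local reroutings, and there is no evident way to extract a multiplicative $\Omega(k)$ factor at each of $\Omega(k)$ steps. The paper circumvents this entirely by applying the regularity lemma to the coloring and then invoking the Nikiforov--Schelp stability theorem (their Lemma~\ref{lem:NS}) on the \emph{reduced} graph. If the majority color class in the reduced graph is not near the extremal bipartite/split structure, Nikiforov--Schelp yields an odd cycle of length $t\approx(1/2+\alpha)M$ in the reduced graph; one then counts $C_k$'s threading through the corresponding $\epsilon$-regular pairs via a direct greedy embedding (Lemmas~\ref{lem:countpath2} and~\ref{lem:countcycle1}). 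The regularity framework is what supplies the $\Omega(k)$ choices per step that your proposal is missing.

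\textbf{Near-extremal case.} Your analysis here is in the right spirit but too loose as stated. First, when $r_1\geq k/2$ you claim the red graph on $V_1\cup\{v_0\}$ gives $(1-o(1))(k-1)!/2$ red $C_k$'s; in fact $v_0$ may miss up to $k/2-1$ red edges to $V_1$, which already cuts the count by a constant factor (still enough, but not $1-o(1)$). More seriously, in the blue-bipartite subcase your ``bounded constant factor to flipped edges'' is not justified: with $\epsilon\binom{2k-2}{2}$ flipped edges and a target path of length $k-2$, a na\"ive union bound loses a factor exponential in $\epsilon k$, not a constant. The paper handles this by first passing to cleaned subsets $A'\subset A$, $B'\subset B$ in which every vertex has near-full degree in the appropriate color (their Claim~\ref{claim:updateEC}), and then running a rather delicate case analysis (Claims~\ref{claim:red}--\ref{claim:tildePbothAB}) that repeatedly peels off at most one bad vertex until the structure is rigid enough to count directly. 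Your single-vertex-$v_0$ shortcut does not replace this cleaning and casework.
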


As for paths and even cycles, this bound is tight up to the constant $c$. However, it is proved using rather different methods to those employed in~\cite{CFSW}, because the Ramsey numbers, and the associated extremal colorings, are quite different for odd cycles and for paths and even cycles. To describe the extremal colorings in the odd setting, consider the red/blue edge-coloring $\chi(a,b)$ of the complete graph on $n=a+b$ vertices with vertex set $A \cup B$, $|A|=a$ and $|B|=b$, where $A$ and $B$ form blue cliques and all edges between $A$ and $B$ are red. Let $k \geq 5$ be an odd positive integer. Then $\chi(k-1, k-1)$ is a coloring of the complete graph on $r(C_k) - 1 = 2k - 2$ vertices with no monochromatic $C_k$, while $\chi(k, k-1)$ is a coloring of the complete graph on $r(C_k) = 2k-1$ vertices with exactly $(k-1)!/2$ monochromatic copies of $C_k$, as all monochromatic $C_k$ are in the blue clique of order $k$. 
This provides an upper bound on $m(C_k)$ showing that the bound in Theorem~\ref{thm:main} is tight apart from a lower-order factor. It also suggests that our bound can be strengthened, as follows.

 \begin{conjecture} \label{conj:ck} 
 For any sufficiently large odd integer $k$,  $m(C_k) = (k-1)!/2$.
\end{conjecture}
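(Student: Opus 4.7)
The coloring $\chi(k, k-1)$ already delivers the matching upper bound $m(C_k) \le (k-1)!/2$: its only monochromatic $C_k$ are the $(k-1)!/2$ Hamilton cycles of the blue clique on the side of size $k$, while the red bipartite graph across the partition and the blue clique of size $k-1$ contribute none. The task is therefore the exact lower bound, which I would approach through a stability theorem followed by a precise perturbation count.

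The stability step is to show that any two-coloring of $K_{2k-1}$ with at most $C\,(k-1)!/2$ monochromatic $C_k$, for some absolute constant $C$, admits (after swapping colors if needed) a vertex partition $V = A \sqcup B$ with $|A|=k$, $|B|=k-1$ such that the number of \emph{bad} edges---red edges inside $A$, red edges inside $B$, and blue edges between $A$ and $B$ beyond possibly a single one---is at most some small $\varepsilon k$. I would extract this by rerunning the proof of Theorem~\ref{thm:main} while tracking when each of its inequalities is nearly tight, then combining the resulting structural information with the classical stability theorems of Rosta and Faudree--Schelp for the Ramsey number $r(C_k)$. A useful auxiliary statement is a supersaturation lemma: a red/blue coloring of $K_{2k-1}$ whose red graph is $\varepsilon k$-far from bipartite contains at least $\varepsilon k \cdot (k-2)! \cdot 2^{\Omega(k)}$ red $C_k$, obtained by counting odd closed walks of length $k$ through an ``odd'' red edge.

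Given the stability step, the perturbation step is to show that no modification of $\chi(k,k-1)$ strictly decreases the monochromatic $C_k$ count. The arithmetic is asymmetric across edge types. Flipping a single red $A$--$B$ edge to blue changes nothing: a single blue crossing cannot close a cycle, and the red graph remains bipartite. Flipping a blue edge $uv$ inside $A$ to red destroys exactly $(k-2)!$ blue $C_k$ through $uv$, but creates red $C_k$ through $uv$ in bijection with paths of length $k-1$ from $v$ to $u$ in the red $K_{k,k-1}$; a direct count gives roughly $(k-2)!(k-1)!/((k-1)/2)!^2 \sim (k-2)! \cdot 2^{k-1}/\sqrt{k}$ such paths, so the gain exceeds the loss by a factor $2^{\Omega(k)}$. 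A similar computation handles red edges inside $B$, while any pair of blue $A$--$B$ edges, combined with blue paths inside $A$ and inside $B$, generates new blue $C_k$. Careful bookkeeping across all bad-edge configurations, using the smallness of $\varepsilon k$ to control higher-order interactions, yields the matching bound $m(C_k) \ge (k-1)!/2$.

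The main obstacle is the stability step. Theorem~\ref{thm:main} gives $m(C_k) \ge (ck)^k$ for a small absolute constant $c$, whereas $(k-1)!/2 \sim \sqrt{\pi k/2}\cdot(k/e)^{k-1}$, so there is an $e^{\Theta(k)}$ gap between the bound currently available and the one the conjecture requires. A stability version extracted naively from the proof of Theorem~\ref{thm:main} will therefore only pin the coloring down up to this enormous slack, which is far too coarse to force the structure of $\chi(k,k-1)$. Closing the gap will likely require a refined counting argument in which the correct asymptotic constant is present from the outset---for instance, a symmetry-aware embedding count of $C_k$ that exploits the bipartite structure of the near-extremal red graph directly, rather than the generic embedding bound driving the current proof. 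This is where I expect the bulk of the difficulty of the conjecture to lie.
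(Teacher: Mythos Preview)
The statement you are attempting is labeled a \emph{conjecture} in the paper, and the paper does not prove it; it only proves the weaker bound $m(C_k)\ge (ck)^k$ of Theorem~\ref{thm:main} and records $\chi(k,k-1)$ as a matching upper-bound construction. So there is no ``paper's own proof'' to compare against. Your proposal is, correspondingly, not a proof but a strategy sketch, and you say as much in your final paragraph.

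Your outline---stability down to a small perturbation of $\chi(k,k-1)$, then an exact local perturbation argument---is the natural program, and your single-edge perturbation counts are correct (flipping a blue edge inside $A$ destroys $(k-2)!$ blue $C_k$ and creates $(k-1)!(k-2)!/((k-1)/2)!^2\sim (k-2)!\cdot 2^{k-1}/\sqrt{\pi k/2}$ red $C_k$, a net gain). But you correctly identify the genuine gap: the stability step. The paper's proof of Theorem~\ref{thm:main} does not come close to supplying it. In Case~1 of Lemma~\ref{lem:main2} the count comes from Lemma~\ref{lem:countcycle1} and carries factors of $(d-10\sqrt{\epsilon})^k$ with $d$ only barely above $10\sqrt{\epsilon}$, losing an exponential factor outright. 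Even in Case~2, several of the sub-cases (e.g., Claims~\ref{claim:ABedge} and~\ref{claim:PbothAB}) yield bounds of the shape $(n/5e)^{k-6}$ or $(n/8e)^{k-5}$, which are exponentially smaller than $(k-1)!/2$. So rerunning the proof of Theorem~\ref{thm:main} ``while tracking when each of its inequalities is nearly tight,'' as you propose, will not force the coloring to within $o(k)$ bad edges of $\chi(k,k-1)$; the slack is $e^{\Theta(k)}$ throughout, not just in Case~1. Closing this requires a genuinely new counting argument with the correct leading constant, exactly as you say, and that is the content of the conjecture.
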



\section{Proof of Theorem \ref{thm:main}}

\subsection{Preliminaries}

As in our proof of Theorem \ref{thm:even}  in \cite{CFSW}, we will use Szemer\'edi's regularity lemma, an important  tool which gives a rough structural decomposition for all graphs. Roughly speaking, for any graph, the regularity lemma outputs a  vertex partition of the graph into a small number of parts, where  the bipartite graph between almost every pair of  parts behaves like a random graph. Among its many applications (see, for example,~\cite{Regularity}), this decomposition is useful for embedding and counting copies of sparse graphs, such as the cycles that concern us here. 

To formally state the regularity lemma, we first need some definitions to quantify what is meant by a ``random-like" bipartite graph. 
For a pair of disjoint vertex subsets $(X, Y)$ of a graph, let $d(X, Y) = e(X,Y)/|X||Y|$
denote the density of edges between $X$ and $Y$.

\begin{definition}[$\epsilon$-regular pair]
A pair $(X, Y)$ of disjoint vertex subsets of a graph is said to be $\epsilon$-regular if, for all subsets $U \subset X, V \subset Y$ such that $|U| \geq \epsilon |X|$ and $|V| \geq \epsilon |Y|$, $|d(U, V) - d(X,Y)| \leq \epsilon$. 
\end{definition}

The next lemma sets out two basic facts about $\epsilon$-regular pairs that will be useful later. 

\begin{lemma}\label{lem:epsregprop}
If $(X,Y)$ is an $\epsilon$-regular pair and $d(X, Y)=d$, then the following hold:
\begin{enumerate}[(i)]
\item If $Y' \subset Y$ satisfies $|Y'| \geq \epsilon |Y|$, then the number of vertices in $X$ with degree in $Y'$ greater than $(d+\epsilon)|Y'|$ is less than $\epsilon |X|$ and the number of vertices in $X$ with degree in $Y'$ less than $(d-\epsilon)|Y'|$ is less than $\epsilon |X|$. 
\item If $X' \subset X$ and $Y'\subset Y$ are such that $|X'| \geq \alpha |X|$ and $|Y'| \geq \alpha |Y|$, then $(X', Y')$ is $\max(\epsilon/\alpha, 2\epsilon)$-regular.
\end{enumerate}
\end{lemma}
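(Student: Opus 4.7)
The plan is to derive both parts as essentially immediate consequences of the definition of $\epsilon$-regularity, with (i) by a contradiction argument and (ii) by a triangle-inequality argument on densities.

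For part (i), I would proceed as follows. Suppose, toward contradiction, that the set $U \subset X$ of vertices with more than $(d+\epsilon)|Y'|$ neighbors in $Y'$ has size at least $\epsilon|X|$. Double counting the edges between $U$ and $Y'$ gives $e(U, Y') > (d+\epsilon)|U||Y'|$, hence $d(U, Y') > d + \epsilon$. Since $|U| \geq \epsilon|X|$ and $|Y'| \geq \epsilon|Y|$ by hypothesis, the definition of $\epsilon$-regularity forces $|d(U, Y') - d| \leq \epsilon$, a contradiction. The bound for vertices of low degree follows from the symmetric argument with $d-\epsilon$ in place of $d+\epsilon$.

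For part (ii), set $\beta = \max(\epsilon/\alpha, 2\epsilon)$ and take arbitrary $U \subset X'$ and $V \subset Y'$ with $|U| \geq \beta|X'|$ and $|V| \geq \beta|Y'|$. The key step is to lift to the original pair $(X, Y)$: the chain $|U| \geq (\epsilon/\alpha)|X'| \geq (\epsilon/\alpha) \cdot \alpha|X| = \epsilon|X|$ (and similarly $|V| \geq \epsilon|Y|$) allows me to apply $\epsilon$-regularity of $(X, Y)$ to the pair $(U, V)$, yielding $|d(U, V) - d(X, Y)| \leq \epsilon$. In the nontrivial regime $\epsilon \leq \alpha$ (outside of which $\beta > 1$ and the statement is vacuous), we also have $|X'| \geq \epsilon|X|$ and $|Y'| \geq \epsilon|Y|$, so the same $\epsilon$-regularity gives $|d(X', Y') - d(X, Y)| \leq \epsilon$. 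The triangle inequality then produces $|d(U, V) - d(X', Y')| \leq 2\epsilon \leq \beta$, as required.

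I do not expect any step to be a genuine obstacle; the argument is essentially bookkeeping. The only subtlety worth flagging is why the $2\epsilon$ term must appear in the maximum alongside $\epsilon/\alpha$: it comes from the triangle inequality costing an extra $\epsilon$ when translating the regularity statement from $(X, Y)$ to $(X', Y')$. The mild edge cases where $\alpha$ is small relative to $\epsilon$ are absorbed either into the $2\epsilon$ bound or into the vacuous regime $\beta > 1$.
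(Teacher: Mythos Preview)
Your proof is correct; both parts are handled by the standard arguments. Note, however, that the paper does not actually supply a proof of this lemma---it is stated as a pair of basic facts about $\epsilon$-regular pairs and left without proof---so there is no ``paper's own proof'' to compare against. Your write-up is exactly the routine verification one would expect.
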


A vertex partition is called {\it equitable} if each pair of parts differ in size by at most one. We are now ready to state the regularity lemma.

\begin{lemma}[Szemer\'edi’s regularity lemma]\label{reglem}
For every $\epsilon>0$ and positive integer $l$, there are positive integers $n_0$ and  
$M_0$ such that every graph $G$ with at least $n_0$ vertices admits an equitable vertex partition $V(G) = V_1 \cup \dots \cup V_M$ into $M$ parts with $l \leq M \leq M_0$ where
all but at most $\epsilon \binom{M}{2}$ pairs of parts $(V_i, V_j )$ with  $1 \leq i < j \leq M$ are $\epsilon$-regular.
\end{lemma}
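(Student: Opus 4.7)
The plan is to prove this via Szemerédi's classical \emph{energy (index) increment} argument. Define the index of a partition $\mathcal{P} = \{V_1, \ldots, V_M\}$ of $V(G)$ by
\[
\mathrm{ind}(\mathcal{P}) = \sum_{1 \le i < j \le M} \frac{|V_i||V_j|}{n^2}\, d(V_i, V_j)^2,
\]
which always lies in $[0,1/2]$. Two standard observations drive the argument: first, refining a partition never decreases its index, by convexity of $x \mapsto x^2$; second, the \emph{defect Cauchy--Schwarz inequality} asserts that if a pair $(V_i, V_j)$ is not $\epsilon$-regular, witnessed by subsets $U \subset V_i$ and $W \subset V_j$ with $|d(U,W)-d(V_i,V_j)|>\epsilon$, then subdividing $V_i$ into $\{U, V_i\setminus U\}$ and $V_j$ into $\{W, V_j\setminus W\}$ increases the contribution of this pair to the index by at least $\epsilon^4 |V_i||V_j|/n^2$.

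Starting from any equitable partition $\mathcal{P}_0$ into exactly $l$ parts, I would iterate the following step. Given a current partition $\mathcal{P}_t$ with $M_t$ parts, stop and output it if it already has at most $\epsilon \binom{M_t}{2}$ irregular pairs. Otherwise, for each irregular pair $(V_i, V_j)$ choose witnessing subsets, and then simultaneously refine every $V_i$ by the Boolean algebra generated by all witnesses incident to it. The resulting partition $\mathcal{P}_t'$ has at most $M_t \cdot 4^{M_t}$ parts. Summing the per-pair gains over all at least $\epsilon \binom{M_t}{2}$ irregular pairs, most of which satisfy $|V_i||V_j| \approx n^2/M_t^2$ because $\mathcal{P}_t$ is equitable, one gets $\mathrm{ind}(\mathcal{P}_t') \ge \mathrm{ind}(\mathcal{P}_t) + \epsilon^5/8$.

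Since the index is bounded above by $1/2$, this loop terminates after at most $T = 4/\epsilon^5$ rounds, and the number of parts grows by at most a $T$-fold iterated exponential starting from $l$; this is where the absolute upper bound $M_0 = M_0(\epsilon, l)$ comes from, along with $n_0$ chosen large enough that this tower is $\le n$. The main obstacle, on which I would spend most of the effort, is restoring equitability after each refinement step, since the natural refinement by Boolean witnesses shatters the parts into many unequal pieces. The standard fix is to chop every resulting piece into blocks of a common small size $s$ and to collect all leftover vertices into a single exceptional class $V_0$; one must then verify both that $|V_0|$ stays below $\epsilon n$ throughout the iteration and that the index lost during this re-equitization is negligible relative to the $\epsilon^5/8$ gain, so that the termination argument still goes through. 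These verifications are routine but delicate bookkeeping, and once they are in place the proof is complete.
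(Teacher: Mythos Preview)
The paper does not prove this lemma at all: it is stated as the classical Szemer\'edi regularity lemma and used as a black box, with a reference to the survey~\cite{Regularity} for applications. Your outline is the standard energy-increment proof and is essentially correct as a sketch, so there is no mathematical gap, but it is worth knowing that in context no proof is expected here---the paper's ``proof'' is simply to cite the result.
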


In practice, we will use the following standard colored version of the regularity lemma.

\begin{lemma}[Colored regularity lemma] \label{reglem-twocolor} 
For every $\epsilon>0$ and positive integer $l$, there are positive integers $n_0$ and $M_0$ such that every two-edge-coloring of the complete graph $K_n$ with $n \geq n_0$ in red and blue admits an equitable vertex partition $V(G)=V_1 \cup \dots \cup V_M$ into $M$ parts with $l \leq M \leq M_0$ where all but at most $\epsilon{M \choose 2}$ pairs of parts $(V_i,V_j)$ with $1 \leq i < j \leq M$ are $\epsilon$-regular in both the red and blue subgraphs. 
\end{lemma}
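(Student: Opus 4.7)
The plan is to deduce the colored version directly from the uncolored Szemer\'edi regularity lemma (Lemma~\ref{reglem}), by applying it to the red subgraph and exploiting the fact that $\epsilon$-regularity is a symmetric property of the two color classes inside a complete graph.

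More concretely, given $\epsilon>0$ and a positive integer $l$, I would take the thresholds $n_0$ and $M_0$ produced by Lemma~\ref{reglem} with these same parameters. Given any two-edge-coloring of $K_n$ with $n \ge n_0$, let $R$ denote the red subgraph and apply Lemma~\ref{reglem} to $R$ to obtain an equitable partition $V(K_n)=V_1\cup\dots\cup V_M$ with $l \le M \le M_0$ such that at most $\epsilon\binom{M}{2}$ pairs $(V_i,V_j)$ fail to be $\epsilon$-regular in $R$.

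The key observation is that this same partition works in blue as well. Let $B$ denote the blue subgraph, and write $d_R$ and $d_B$ for densities in $R$ and $B$ respectively. For any disjoint subsets $X,Y \subset V(K_n)$ and any $U\subset X$, $V\subset Y$, every edge between $X$ and $Y$ (respectively between $U$ and $V$) is either red or blue, so $d_R(X,Y)+d_B(X,Y)=1$ and $d_R(U,V)+d_B(U,V)=1$. Subtracting gives
\[
|d_B(U,V)-d_B(X,Y)| \;=\; |d_R(U,V)-d_R(X,Y)|.
\]
Hence a pair is $\epsilon$-regular in $R$ if and only if it is $\epsilon$-regular in $B$, and the at most $\epsilon\binom{M}{2}$ pairs that fail $\epsilon$-regularity in $R$ are exactly those that fail $\epsilon$-regularity in $B$. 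The partition therefore meets the conclusion of the colored regularity lemma.

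There is no genuine obstacle in this argument; the only substantive ingredient beyond invoking Lemma~\ref{reglem} is the density-complementarity identity above, which is precisely the reason $\epsilon$-regularity (a two-sided deviation condition) transfers between a graph and its complement. The full argument is essentially a one-line reduction to the monochromatic regularity lemma.
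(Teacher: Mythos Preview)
Your argument is correct and is exactly the approach taken in the paper: apply Lemma~\ref{reglem} to the red graph and use the identity $d_R+d_B=1$ to conclude that a pair is $\epsilon$-regular in red if and only if it is $\epsilon$-regular in blue. The paper states this equivalence immediately after Lemma~\ref{reglem-twocolor} as the sole justification.
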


Lemmas \ref{reglem} and \ref{reglem-twocolor} are in fact equivalent, since a pair $(V_i,V_j)$ in an edge-coloring of $K_n$ with colors red and blue is $\epsilon$-regular in red if and only if it is $\epsilon$-regular in blue. 

\subsection{The stability lemma}

The main ingredient in the proof of Theorem~\ref{thm:main} is a stability lemma, Lemma \ref{lem:main2} below, which implies that any two-edge-coloring of $K_n$ (where, for us, $n$ will be $r(C_k) = 2k-1$ for some sufficiently large odd $k$) either has a regularity partition whose reduced graph contains a long monochromatic path or the edge-coloring of $K_n$ is close to the coloring $\chi(k, k-1)$ described before Conjecture \ref{conj:ck}. In either case, we can show that the conclusion of Theorem~\ref{thm:main} must hold.

Before introducing the stability lemma, we need 
to make precise what we mean by saying that a two-edge-coloring of the complete graph on $2k-1$ vertices is close to $\chi(k, k-1)$. In the definition, we will refer to the density of a set $X$, given by $d(X,X) = 2e(X)/|X|^2$. 

\begin{definition}[Extremal coloring with parameter $\lambda$] \label{def:ec2}
A two-edge-coloring of the complete graph on $n$ vertices is an {\it extremal coloring with parameter $\lambda$} 
if there exists a partition $ A \cup B$ of the vertex set such that 
\begin{itemize}
\item $|A| \geq (1/2-\lambda)n$ and $|B| \geq (1/2 - \lambda)n$;
\item the graph induced on $A$ has density at least $(1-\lambda)$ in some color, the graph induced on $B$ has density at least $(1-\lambda)$ in the same color, and the bipartite graph between $A$ and $B$ has density at least $(1-\lambda)$ in the other color. 
\end{itemize}
\end{definition}

Our key stability lemma is now as follows.

\begin{lemma}\label{lem:main2} 
For any $0<\epsilon < 10^{-20}$, there is a constant $M_0=M_0(\epsilon)$ such that if $\alpha = 20 \sqrt{\epsilon}$, 
then, for $n$ sufficiently large in terms of $\epsilon$, any two-edge-coloring of the complete graph $K_n$ falls into one of the following two cases:  
\begin{itemize} 
\item \textbf{Case 1:} There is a positive integer $\epsilon^{-1} \leq M \leq M_0$ 
such that if $t$ is the odd integer with
\begin{equation} 
(1/2 + \alpha) M \geq t > (1/2 + \alpha) M - 2, \label{eqn:reducedcycle}
\end{equation}
then there are disjoint vertex sets $V_0, \dots, V_{t-1}$, indexed by the elements of $\mathbb{Z}/t\mathbb{Z}$, and a color $\chi$ such that, for each $i \in\mathbb{Z} / t \mathbb{Z}$, $|V_i| \geq  \lfloor n/M \rfloor$, the pair $(V_i, V_{i+1})$ is $\epsilon$-regular in the color $\chi$, and the edge density between $V_i$ and $V_{i+1}$ in the color $\chi$ is at least $11\epsilon^{1/2}$.
\item  \textbf{Case 2:} The graph is an extremal coloring with parameter $300 \sqrt{\alpha}$.
\end{itemize} 
\end{lemma}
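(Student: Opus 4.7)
My plan is to apply the colored regularity lemma (Lemma~\ref{reglem-twocolor}) with parameter $\epsilon$ and lower bound $l = \lceil \epsilon^{-1} \rceil$ to the given two-coloring of $K_n$, producing an equitable partition $V_1,\ldots,V_M$ with $\epsilon^{-1}\leq M \leq M_0(\epsilon)$ in which all but at most $\epsilon \binom{M}{2}$ pairs $(V_i,V_j)$ are $\epsilon$-regular in both colors. I would then form a two-colored reduced graph $R$ on vertex set $[M]$ by placing a red (respectively, blue) edge between $i$ and $j$ whenever $(V_i,V_j)$ is $\epsilon$-regular and has red (resp.\ blue) density at least $11\epsilon^{1/2}$; the same pair may carry both colors. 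Since the two densities sum to one and $11\epsilon^{1/2}<1/2$, essentially every regular pair receives at least one color, and this threshold $11\epsilon^{1/2}$ is precisely the density bound demanded in Case~1.

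For Case~1 the goal is to find, in one color, a cycle of exactly length $t$, the odd integer in $((1/2+\alpha)M-2,(1/2+\alpha)M]$. I would first locate a long monochromatic cycle of length at least $t$ in $R$: one color, say red, spans at least half of the nearly complete reduced graph, so an Erd\H{o}s--Gallai-type argument on its minimum degree produces a red cycle of circumference exceeding $M/2$. A Bondy-style pancyclicity argument would then convert this long cycle into a cycle of every length in a large range, and in particular one of length exactly $t$. Since the resulting cycle is an edge set inside $R$ of a single color, $\epsilon$-regularity of consecutive pairs and their density lower bound $11\epsilon^{1/2}$ come for free, yielding the conclusion of Case~1.

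If no such monochromatic odd cycle of length $t$ exists in either color, we aim for Case~2. The odd circumferences of both $R_{\text{red}}$ and $R_{\text{blue}}$ must then lie strictly below $t\approx M/2$, and for each color separately I would invoke a stability form of the Andr\'asfai--Erd\H{o}s--S\'os / Erd\H{o}s--Gallai theorem: a dense graph on $M$ vertices whose longest odd cycle is shorter than $(1/2+\alpha)M$ is close to bipartite, so there exist bipartitions $(A_r,B_r)$ of red and $(A_b,B_b)$ of blue in which almost all edges of the respective color cross. Since almost every pair of parts carries an edge in some color, a density-based case analysis on the four sets $A_r\cap A_b$, $A_r\cap B_b$, $B_r\cap A_b$, $B_r\cap B_b$ forces the two bipartitions to coincide up to swapping sides, yielding a single partition $[M]=X\cup Y$ with one color concentrating inside each class and the other color concentrating across. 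Setting $A=\bigcup_{i\in X}V_i$ and $B=\bigcup_{i\in Y}V_i$, the $\epsilon$-regularity of the individual pairs then upgrades the reduced-graph densities to near-complete densities in $K_n$, producing the two conditions of Definition~\ref{def:ec2} with parameter $300\sqrt{\alpha}$.

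The most delicate step I expect is producing a monochromatic cycle of \emph{exactly} length $t$, rather than merely close to $t$, in Case~1: even with pancyclicity in hand, a parity or range mismatch may force local surgery on the cycle, and the narrow window $(1/2+\alpha)M-2<t\leq(1/2+\alpha)M$ leaves essentially no slack. A secondary challenge is tracking parameter loss through the chain $\epsilon\to 11\epsilon^{1/2}\to \alpha=20\sqrt{\epsilon}\to 300\sqrt{\alpha}$, which is what ultimately fixes the explicit constant $300$ in the extremality parameter of Case~2.
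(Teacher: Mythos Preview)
Your outline matches the paper's skeleton: apply the colored regularity lemma, build a reduced graph with a density threshold of order $\epsilon^{1/2}$, look for a monochromatic $C_t$ in the reduced graph for Case~1, and otherwise lift a near-extremal partition back to $K_n$ for Case~2. The substantive difference is the engine driving the dichotomy. The paper applies a single black-box stability result of Nikiforov and Schelp (Lemma~\ref{lem:NS}) to the denser color class $H_R$: any graph on $M$ vertices with more than $(1/4-\beta)M^2$ edges either contains $C_t$ for \emph{every} $t\in[3,\lceil(1/2+\alpha)M\rceil]$, or admits a partition $U_0\cup U_1\cup U_2$ with $|U_0|$ small and $H_R-U_0$ a subgraph of either the complete bipartite graph on $(U_1,U_2)$ or its complement. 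This one lemma supplies both the exact-length pancyclicity you flag as delicate and the structural alternative for Case~2.

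Your substitute for this lemma has real gaps. First, ``Erd\H{o}s--Gallai plus Bondy'' does not deliver what you need: Bondy's pancyclicity theorem presupposes a Hamiltonian graph, whereas Erd\H{o}s--Gallai only hands you a cycle of length about $M/2$; turning a long cycle in a dense graph into cycles of \emph{all} lengths up to $(1/2+\alpha)M$ is precisely the content of Nikiforov--Schelp and not a routine consequence of the classical statements you cite. Second, your Case~2 plan --- apply odd-circumference stability to each color and intersect the two bipartitions --- is flawed on two counts: a dense graph with no $C_t$ may be close to a disjoint union of two cliques rather than close to bipartite (two cliques of size $M/2$ have about $M^2/4$ edges and no cycle longer than $M/2$), and only the denser color is guaranteed enough edges to trigger any stability statement at all. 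The paper instead applies Nikiforov--Schelp once, to the denser color, and then handles one further wrinkle you omit: after obtaining the structural partition, a stray blue edge inside $U_1'$ or $U_2'$ can still be combined with the nearly complete blue bipartite graph between them (via Claim~\ref{claim:redgeneral}) to build a blue $C_t$ and revert to Case~1; only after excluding this does the extremal coloring with parameter $300\sqrt{\alpha}$ follow.
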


We will hold off on proving Lemma~\ref{lem:main2} until Section~\ref{sec:stab}, first showing, across the next two sections, how Theorem~\ref{thm:main} follows from either of the conclusions in the lemma.

\subsection{Theorem~\ref{thm:main} for colorings satisfying Case 1 of Lemma~\ref{lem:main2}} \label{subsec:case1odd}

In this section, we prove Theorem~\ref{thm:main} for colorings satisfying Case 1 of Lemma~\ref{lem:main2}. We will repeatedly work in a setting where we have disjoint vertex sets $V_0, \dots, V_{t-1}$ from a graph where the indices of the $V_i$ are the elements of $\mathbb{Z} / t \mathbb{Z}$. We say that a path $P$ of length $\ell$ with vertices $w_0, w_1, \dots, w_\ell$ and edges $w_0 w_1, w_1 w_2, \dots, w_{\ell - 1} w_\ell$ is {\it $(V_0, \dots, V_{t-1})$-transversal} if $w_i \in V_i$ for each $0 \leq i \leq \ell$. Note that we will typically have $\ell > t$, so the path may pass through each of the vertex sets multiple times.

\begin{lemma}\label{lem:countpath2}
Suppose that $0<\epsilon < 10^{-5}$ and $t$ and $n$ are integers with $t\geq 2$ and $n \geq \epsilon^{-2}$. Suppose also that $V_0, \dots, V_{t-1}$ are disjoint vertex sets in a graph where the indices of the $V_i$ are the elements of $\mathbb{Z} / t \mathbb{Z}$
and, for each $i \in\mathbb{Z} / t \mathbb{Z}$, $|V_i| \geq n$, $(V_i, V_{i+1})$ is  $\epsilon$-regular, and $d(V_i,V_{i+1}) \geq d$ for some $d  \geq 5\epsilon^{1/2}$. Then the following hold:  
\begin{enumerate}
\item For any integer $\ell$ with $2\leq \ell \leq t (1-\sqrt{\epsilon}) n$ and any vertex $w_0 \in V_0$ with at least $(d-\epsilon)|V_1|$ neighbors in  $V_1$, the number of $(V_0, \dots, V_{t-1})$-transversal paths of length $\ell$ starting from $w_0$ is at least $(d-\epsilon - \sqrt{\epsilon})^\ell \prod_{i=1}^\ell (n-\lfloor i/t \rfloor)$.
\item For any integer $\ell$ with $4 \leq \ell \leq t (1-3\sqrt{\epsilon}) n$ which is divisible by $t$ and any two (not necessarily distinct) vertices $w_0, w_0' \in V_0$ such that $w_0$ has at least $(d-\epsilon)|V_1|$ neighbors in  $V_1$ and $w'_0$ has at least $(d-\epsilon)|V_{t-1}|$ neighbors  $V_{t-1}$,  the number of $(V_0, \dots, V_{t-1})$-transversal paths of length $\ell$ with end vertices $w_0$ and $w_0'$ is at least $(d-5\sqrt{\epsilon})^{\ell-1} (1-2\sqrt{\epsilon})^{\ell-2} (\epsilon n)\prod_{i=1}^{\ell-2} (n-\lfloor i/t \rfloor).$
\end{enumerate}
\end{lemma}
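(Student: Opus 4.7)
The plan is to construct the path one vertex at a time, using the $\epsilon$-regularity of consecutive pairs to count valid extensions at each step. Part 1 is a textbook regularity counting argument; Part 2 requires additional bookkeeping at the final two steps to hit the prescribed endpoint $w_0'$.

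For Part 1, I would induct on the path length. At step $i$ (indices on the $V$s taken mod $t$), the current endpoint $w_{i-1} \in V_{i-1}$ is extended to $w_i \in V_i$. Let $U \subseteq V_i$ be the vertices unused so far; then $|U| \ge n - \lfloor i/t \rfloor \ge \sqrt{\epsilon}\, n \ge \epsilon |V_i|$ by the hypothesis $\ell \le t(1-\sqrt{\epsilon})n$, so Lemma~\ref{lem:epsregprop}(i) applied to $(V_{i-1}, V_i)$ with $Y' = U$ guarantees that all but at most $\epsilon |V_{i-1}|$ vertices of $V_{i-1}$ have at least $(d-\epsilon)|U|$ neighbors in $U$. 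Call a partial path \emph{nice} if every intermediate endpoint is such a useful vertex; a nice path of length $i-1$ has at least $(d-\epsilon)|U| - \epsilon |V_i|$ extensions to a nice path of length $i$, where the first term counts available neighbors in $U$ and the second discounts those landing on vertices of $V_i$ that would fail to be useful. Dividing by $(n - \lfloor i/t\rfloor)$ and absorbing the additive $\epsilon$-loss into $\sqrt{\epsilon}$-slack yields a per-step multiplicative factor of at least $d - \epsilon - \sqrt{\epsilon}$; multiplying over the $\ell$ steps gives the bound.

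For Part 2, set $N = N(w_0') \cap V_{t-1}$, so $|N| \ge (d-\epsilon) n$ by hypothesis. Each length-$\ell$ transversal path from $w_0$ to $w_0'$ corresponds bijectively to a length-$(\ell-1)$ transversal path from $w_0$ ending at some $v = w_{\ell-1} \in N$ and avoiding $w_0'$. I would express this count as a sum over $v \in N$ of the number of length-$(\ell-2)$ transversal paths from $w_0$ to a neighbor of $v$ in $V_{t-2}$ that avoid $v$ and $w_0'$. For each $v \in N$ that is \emph{good}, meaning it has at least $(d-\epsilon)|V_{t-2}|$ neighbors in $V_{t-2}$---by Lemma~\ref{lem:epsregprop}(i) applied to $(V_{t-1}, V_{t-2})$, at least $|N| - \epsilon |V_{t-1}| \ge (d - 2\epsilon) n$ vertices of $N$ qualify---I would apply a variant of the Part 1 induction to the sequence obtained by deleting $w_0'$ from $V_0$ and $v$ from $V_{t-1}$, with Lemma~\ref{lem:epsregprop}(ii) confirming the modified pairs remain $2\epsilon$-regular and the densities drop by at most $\epsilon$, and with the final step into $V_{t-2}$ restricted to $N(v) \cap V_{t-2}$. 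The vertex deletions contribute the $(1 - 2\sqrt{\epsilon})^{\ell-2}$ factor, the restriction at the final step contributes an extra density factor of $d - O(\sqrt{\epsilon})$, and summing over the at least $(d-2\epsilon) n$ good choices of $v$ supplies the averaging gain that produces the $\epsilon n$ factor in the claim.

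The main technical obstacle is that for $\ell$ near the upper bound $t(1 - 3\sqrt{\epsilon}) n$, a per-path count of valid extensions through a fixed midpoint $v$ can be vacuous: an individual length-$(\ell-2)$ path may already occupy most of the common neighbors of $w_0'$ in $V_{t-1}$, leaving none available at the last step. The double-counting sum over midpoints $v \in N$ bypasses this issue by trading pointwise bounds for averaged ones, invoking the regularity-driven near-uniform distribution of used vertices across the sum to extract the $\epsilon n$ factor. Carefully tracking how the restriction to $N(v) \cap V_{t-2}$ and the vertex deletions compose with the Part 1 bound so as to land on precisely the exponents $(d - 5\sqrt{\epsilon})^{\ell-1}$ and $(1 - 2\sqrt{\epsilon})^{\ell-2}$ in the claim is the most delicate bookkeeping in the proof.
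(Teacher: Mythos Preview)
Your approach to Part~1 is correct and essentially matches the paper's argument.

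For Part~2, however, the averaging step does not close the gap you identify. After switching the order of summation, the quantity you must lower-bound is $\sum_{P}|N\cap N(w_{\ell-2})\setminus P|$, where $P$ ranges over nice length-$(\ell-2)$ paths from $w_0$. The positive term $|N\cap N(w_{\ell-2})|$ can be held near $(d-\epsilon)^2|V_{t-1}|$ via an extra niceness condition, but the subtracted term is bounded only by $|P\cap V_{t-1}|=\ell/t-1$, which for $\ell$ near $t(1-3\sqrt{\epsilon})n$ is about $(1-3\sqrt{\epsilon})n$. When $d$ is near its minimum value $5\sqrt{\epsilon}$ the subtracted term dominates by orders of magnitude, so the per-path difference can vanish for \emph{every} $P$, and summing zeros gives nothing. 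Your phrase ``regularity-driven near-uniform distribution of used vertices'' amounts to the hope that, averaged over nice $P$, only a $|N|/|V_{t-1}|$ fraction of the $V_{t-1}$-vertices of $P$ land in $N$; but this uniformity is not a consequence of $\epsilon$-regularity or of the Part~1 induction, and there is no usable upper bound on the total number of paths (the hypotheses give only $|V_i|\ge n$) that could be played off against the Part~1 lower bound to make such an averaging rigorous.

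The paper avoids this entirely by \emph{reserving} in advance a set $W_{t-1}\subset N(w_0')\cap V_{t-1}$ of size exactly $\epsilon|V_{t-1}|$, replacing $V_{t-1}$ by $V_{t-1}\setminus W_{t-1}$ (and $V_0$ by $V_0\setminus\{w_0'\}$), and running the Part~1 induction for $\ell-3$ steps inside these modified sets. The resulting good paths never touch $W_{t-1}$. Each has a candidate set $C\subset V_{t-2}$ of size at least $\epsilon|V_{t-2}|$ for the $(\ell-2)$nd vertex; a single application of $\epsilon$-regularity to the pair $(V_{t-2},V_{t-1})$ then yields at least $(d-\epsilon)|C|\,|W_{t-1}|$ edges from $C$ into $W_{t-1}$, each of which completes to $w_0'$ automatically. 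The reservation is the missing idea: it turns the problematic ``land in $N(w_0')$ while avoiding the whole path'' step into an unconditional edge count between two sets that are guaranteed to remain large, and it is what produces the $\epsilon n$ factor in the statement.
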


\begin{proof} 
For any integer $0 \leq l\leq \ell$, let $N_l$ be the number of good paths of length $l$ starting from $w_0$, where a $(V_0, \dots, V_{t-1})$-transversal path $w_0, w_1, \dots, w_l$ of length $l$ starting from $w_0$ is {\it good} if there are at least $(d-\epsilon)\big(|V_{l+1}| - \lfloor (l+1)/t \rfloor\big)$ ways to extend the path to $V_{l+1}$. We will prove by induction that $N_l \geq (d-\epsilon - \sqrt{\epsilon})^l \prod_{i=1}^l (n-\lfloor i/t \rfloor)$ for $0 \leq l \leq \ell$, which will settle Part 1.  

For the base case, note that $N_0=1$, since the path with zero edges starting from $w_0$ is $w_0$ itself and, by assumption, the vertex $w_0 \in V_0$ has at least $(d-\epsilon)|V_1|$ neighbors in $V_1$. Suppose now that the required lower bound holds for $N_{l-1}$ and we wish to deduce the lower bound for $N_l$.

Fix any good path $P$ of length $l-1$. By the definition of goodness, there are at least $(d - \epsilon) (|V_{l}| - \lfloor l/t \rfloor)$ choices of $w_l \in V_l$ that extend $P$. To bound $N_l$, we need a lower bound on the number of vertices $w_l \in V_l$ such that the path formed by extending $P$ to $w_l$ is also good. 

Let $U$ be the set of vertices in $V_{l}$ which have degree less than $(d - \epsilon) |V_{l+1} \setminus V(P)| = (d - \epsilon) (|V_{l+1}| - \lfloor (l+1)/t \rfloor)$ in $V_{l+1} \setminus  V(P)$. Note that, since $\ell \leq t (1-\sqrt{\epsilon}) n$,
\begin{align*}
|V_{l+1} \setminus V(P)| & =    |V_{l+1}| - \lfloor (l+1)/t \rfloor \geq   |V_{l+1}| -  \ell /t - 1  \geq  |V_{l+1}| -  t(1-\sqrt{\epsilon})n/t - 1\\
  & \geq  |V_{l+1}| -  (1-\sqrt{\epsilon})|V_{l+1}|  - 1 =  \sqrt{\epsilon} |V_{l+1}| - 1 \geq \epsilon |V_{l+1}|.
\end{align*}
Together with the fact that $(V_{l}, V_{l+1})$ is $\epsilon$-regular with density at least $d$, Lemma~\ref{lem:epsregprop} (i) implies that  
$|U| \leq \epsilon |V_{l}|$. 
Hence, the number of choices for $w_{l}$ such that $P$ extended to $w_l$ is also good is at least 
\[ (d-\epsilon)(|V_l| - \lfloor l/t \rfloor) - |U| \geq  (d-\epsilon)(|V_l| - \lfloor l/t \rfloor) - \epsilon |V_l| \geq  (d-\epsilon - \sqrt{\epsilon})(|V_l| - \lfloor l/t \rfloor). 
\] 
The last inequality is equivalent to $(\sqrt{\epsilon} - \epsilon) |V_l| \geq  \sqrt{\epsilon} \lfloor \ell/t \rfloor$, which again follows from $\ell \leq t (1-\sqrt{\epsilon}) n$. Thus,
\begin{equation*}
N_l \geq (d-\epsilon-\sqrt{\epsilon})  (n-\lfloor l/t \rfloor) N_{l-1} \geq (d-\epsilon-\sqrt{\epsilon})^l \prod_{i=1}^l (n-\lfloor i/t \rfloor),  
\end{equation*}
establishing Part 1.

For Part 2, we pass from the $V_i$ to a collection of subsets $V'_i$. By assumption, $|N_{V_{t-1}}(w_0')|\geq (d-\epsilon) |V_{t-1}|\geq \epsilon |V_{t-1}|$, so we may set aside a subset $W_{t-1}$ of $N_{V_{t-1}}(w_0')$ of size $\epsilon |V_{t-1}|$ 
and let $V'_{t-1} = V_{t-1} \setminus W_{t-1}$. If $w'_0$ is distinct from $w_0$, we let $V'_0 = V_0 \setminus \{w'_0\}$, while, in all remaining cases, we let $V'_i = V_i$, noting that $|V'_i| \geq (1 - \epsilon) |V_i|$ for all $i$.
Therefore, if we set $\epsilon' = 2\epsilon$ and $d' = d-\epsilon$, Lemma~\ref{lem:epsregprop} (ii) now tells us that for each $i \in \mathbb{Z} /t\mathbb{Z}$ the pair of sets $(V_i', V_{i+1}')$ is $\epsilon'$-regular with edge density at least $d'$.

As in Part 1, for any positive integer $0 \leq l \leq \ell-3$, let $N_l$ be the number of good paths of length $l$ starting from $w_0$, though with the condition now reading that there are at least $(d'-\epsilon')(|V_{l+1}'| - \lfloor (l+1)/t \rfloor)$ ways to extend the path to a vertex in $V_{l+1}'$. Since $w_0$ has at least $(d-\epsilon)|V_1| - |V_1 \setminus V'_1| \geq (d-2\epsilon) |V_1| > (d'-\epsilon') |V'_1|$ neighbors in $V'_1$ and $\ell \leq t(1-3\sqrt{\epsilon})n \leq t(1-\sqrt{2\epsilon})((1-\epsilon)n) = t(1-\sqrt{\epsilon'}) ((1-\epsilon)n) $, we may apply Part 1 to conclude that
\begin{equation}
N_{\ell-3} \geq  (d'-\epsilon'-\sqrt{\epsilon'})^{\ell-3} \prod_{i=1}^{\ell-3} ((1-\epsilon)n-\lfloor i/t \rfloor).  \label{eqn:p1Nl3}
\end{equation}

Fix any such path $P$ of length $\ell-3$. 
Suppose its vertices are $w_0, w_1, \dots, w_{\ell-3}$ in order, where $w_j \in V_j'$. By definition,  there are at least  $(d'-\epsilon')(|V_{\ell-2}'| - \lfloor (\ell-2)/t \rfloor)$ ways to extend the path to a vertex $w_{\ell-2} \in V_{\ell-2}'$. Denote this set  of candidates for $w_{\ell-2}$ by $C$. Since $\ell$ is divisible by $t$, we have that $C \subset V_{t-2}$. Using that $d \geq 5\sqrt{\epsilon}$ and $\ell/t \leq (1-3\sqrt{\epsilon})n$, we have that $|C| \geq  (d'-\epsilon')(|V_{\ell-2}'|- \lfloor (\ell-2)/t \rfloor) 
\geq \epsilon' |V_{t-2}'| \geq \epsilon |V_{t-2}|$. Since $|W_{t-1}| \geq \epsilon |V_{t-1}|$ and $(V_{t-2}, V_{t-1})$ is $\epsilon$-regular with density at least $d$, the number of edges between $C$ and $W_{t-1}$ is at least 
\begin{equation}
 (d - \epsilon)  |C||W_{t-1}| \geq (d-\epsilon)  (d'-\epsilon')(|V_{\ell-2}'|- \lfloor (\ell-2)/t \rfloor)  \cdot \epsilon |V_{t-1}|. \label{eqn:CVb}
\end{equation}

Note now that $P$ together with the two end vertices of any edge in $E(C, W_{t-1})$ results in a path of length $\ell-1$ that can be extended to $w_0'$. 
Therefore, using \eqref{eqn:p1Nl3} and \eqref{eqn:CVb}, we see that the number of paths with end vertices $w_0$ and $w_0'$ is at least 
\begin{align*}
N_{\ell-3}\cdot   (d - \epsilon)  |C||W_{t-1}| 
& \geq (d'-\epsilon'-\sqrt{\epsilon'})^{\ell-3} \prod_{i=1}^{\ell-3} ((1-\epsilon)n-\lfloor i/t \rfloor)  \\
& \ \ \ \ \ \ \cdot (d-\epsilon)  (d'-\epsilon')(|V_{\ell-2}'|- \lfloor (\ell-2)/t \rfloor)  \cdot \epsilon |V_{t-1}| \\
& \geq  
(d-3\epsilon-2\sqrt{\epsilon})^{\ell-1} (\epsilon n) \prod_{i=1}^{\ell-2} ((1-\epsilon)n-\lfloor i/t \rfloor) \\
& \geq  (d-3\epsilon-2\sqrt{\epsilon})^{\ell-1}(1-\epsilon - \sqrt{\epsilon} )^{\ell-2} (\epsilon n) \prod_{i=1}^{\ell-2} (n-\lfloor i/t \rfloor) \\
& \geq  
(d-5\sqrt{\epsilon})^{\ell-1} (1-2\sqrt{\epsilon})^{\ell-2} (\epsilon n)\prod_{i=1}^{\ell-2} (n-\lfloor i/t \rfloor),
\end{align*}
where the second to last inequality holds because $(1-\epsilon)n - \lfloor i/t \rfloor  \geq (1-\epsilon - \sqrt{\epsilon}) (n - \lfloor i/t \rfloor )$ for $i \leq t(1-2\sqrt{\epsilon})n$. 
\end{proof}

Part 2 of Lemma \ref{lem:countpath2} with $w_0 = w_0'$ implies that there are many cycles of length $\ell$ when $\ell$ is divisible by $t$. 
The next lemma shows that the same result holds even when $\ell$ is not divisible by $t$. 

\begin{lemma}\label{lem:countcycle1}
Suppose that $0< \epsilon <10^{-5}$, $t$ is an odd integer with $t\geq 3$, and $n$ is a positive integer with $n \geq t\epsilon^{-2}$. Suppose also that $V_0, \dots, V_{t-1}$ are disjoint vertex sets in a graph where the indices of the $V_i$ are the elements of $\mathbb{Z}/t\mathbb{Z}$ and, for each $i\in \mathbb{Z}/t\mathbb{Z}$, $|V_i| \geq n$, $(V_i, V_{i+1})$ is $\epsilon$-regular, and $d(V_i,V_{i+1}) \geq d$  for some $d \geq 10\epsilon^{1/2}$. Then, for any odd positive integer $p$ with
\begin{equation} 
2t+6 \leq p \leq t(1-5\sqrt{\epsilon})n, \label{eqn:q}
\end{equation}
the number of cycles of length $p$ is at least  $\frac{\epsilon^2}{4}  n^4  (d-10\sqrt{\epsilon})^{p-2}(1-3\sqrt{\epsilon})^{2p} \prod_{i=1}^{p-4} (n-\lfloor i/t \rfloor)$.
\end{lemma}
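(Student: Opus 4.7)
The plan is to build each length-$p$ cycle by concatenating a forward transversal path $P_1$ of length $\ell_1$ with a backward transversal path $P_2$ of length $\ell_2 = p - \ell_1$, joined at a turn vertex $v$ and returning to a start vertex $u \in V_0$. Since $p$ and $t$ are both odd, $2$ is invertible modulo $t$; I pick $\ell_1$ close to $p/2$ with $\ell_1 \equiv 2^{-1}p \pmod t$, so that $\ell_1 \equiv \ell_2 \pmod t$. Writing $a := \ell_1 \bmod t$, this congruence is precisely what is needed so that the forward walk from $V_0$ lands in $V_a$ after $\ell_1$ steps and the backward walk from $V_a$ returns to $V_0$ after $\ell_2$ steps. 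The hypotheses $p \geq 2t + 6$ and $p \leq t(1-5\sqrt{\epsilon})n$ ensure $\ell_1, \ell_2 \geq 4$ (in fact $\ell_1, \ell_2 \gtrsim t$) and that the combined cycle uses at most $\lceil p/t \rceil \leq (1-5\sqrt{\epsilon})n$ vertices from each $V_i$, so vertex-disjointness is maintainable.

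The key technical tool is a mild generalization of Lemma~\ref{lem:countpath2} Part~2 allowing the endpoints to lie in different $V$-parts: for any $u \in V_i$ and $v \in V_j$ with at least $(d-\epsilon)|V_{i+1}|$ and $(d-\epsilon)|V_{j-1}|$ neighbors in $V_{i+1}$ and $V_{j-1}$ respectively, and any integer $4 \leq \ell \leq t(1-3\sqrt{\epsilon})n$ with $\ell \equiv j - i \pmod t$, the number of $(V_0, \ldots, V_{t-1})$-transversal paths of length $\ell$ from $u$ to $v$ is at least $(d-5\sqrt{\epsilon})^{\ell - 1}(1-2\sqrt{\epsilon})^{\ell-2}(\epsilon n)\prod_{k=1}^{\ell-2}(n - \lfloor k/t \rfloor)$. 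The proof mirrors that of Part~2 after a cyclic relabeling: reserve $W := N_{V_{j-1}}(v)$ as the reservoir for the penultimate vertex, apply Part~1 for $\ell - 3$ steps starting at $u$, and close up via the $\epsilon$-regular pair $(V_{j-2}, V_{j-1})$.

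For each admissible pair $(u, v) \in V_0 \times V_a$, this generalization yields the count of $P_1$ of length $\ell_1$. To count $P_2$ given a fixed $P_1$, I apply the same generalization in the reversed cyclic ordering $\tilde{V}_i := V_{(a-i) \bmod t}$, restricted to the subsets $V_i' := V_i \setminus V(P_1)$; these have size at least $(1 - O(\sqrt{\epsilon}))|V_i|$ since $\ell_1 / t \leq (1-5\sqrt{\epsilon})n$, and remain $O(\epsilon)$-regular by Lemma~\ref{lem:epsregprop}(ii). Since at least $(1-\epsilon)^2 n^2$ pairs $(u, v) \in V_0 \times V_a$ satisfy the required degree conditions by Lemma~\ref{lem:epsregprop}(i), multiplying the two counts, summing over admissible pairs, and dividing by the bounded number of valid $(u, v)$-decompositions per unlabeled cycle (absorbed into the constant $\tfrac{1}{4}$) produces the claimed lower bound. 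The main obstacle is preserving the sharpness of the generalized Part~2 estimate under passage to the subsets $V_i'$: this is the source of the $(1-3\sqrt{\epsilon})^{2p}$ factor in the final bound and is tight thanks to the hypothesis $p \leq t(1-5\sqrt{\epsilon})n$, which controls the deterioration at each of the $\Theta(p)$ intermediate vertex choices.
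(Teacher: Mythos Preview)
Your decomposition is genuinely different from the paper's. The paper does not split the cycle into two near-equal transversal arcs. Instead, it absorbs the residue $r = p \bmod t$ via a short \emph{zigzag} path $L_1$ of even length $h \in \{4,\ldots,2t+2\}$ that alternates only between the two adjacent parts $V_0$ and $V_{t-1}$; the remaining path $L_2$ then has length $p-h$ divisible by $t$ and is counted by a direct application of Part~2 of Lemma~\ref{lem:countpath2} with both endpoints in $V_0$. No generalization of Part~2 to distinct endpoint-parts is needed, and because $L_1$ has at most $2t+1$ interior vertices, deleting them shrinks the relevant parts by only $O(t)$, so one genuinely has $|V_i'| \geq (1-\epsilon/2)|V_i|$ when counting $L_2$.

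Your route also works, but one step is misstated and the bookkeeping is heavier than you suggest. After fixing $P_1$ of length $\ell_1 \approx p/2$, each $V_i$ loses roughly $\ell_1/t$ vertices, which can be as large as $(1-5\sqrt{\epsilon})n/2$; hence $|V_i'| \gtrsim n/2$, not $(1-O(\sqrt{\epsilon}))|V_i|$ as you claim. Regularity survives (with parameter $2\epsilon$ via Lemma~\ref{lem:epsregprop}(ii) at $\alpha \approx 1/2$), and the $P_2$ count carries the product $\prod_{k=1}^{\ell_2-2}\bigl(n-\lceil \ell_1/t\rceil - \lfloor k/t\rfloor\bigr)$, which does telescope with the $P_1$ product into $\prod_{i=1}^{p-4}(n-\lfloor i/t\rfloor)$ up to boundedly many edge terms. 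So the argument can be completed, but the $(1-3\sqrt{\epsilon})^{2p}$ loss in your version comes from the density and regularity degradation on half-size parts, not from a near-full $|V_i'|$. The paper's short-zigzag trick sidesteps this asymmetry entirely: one long transversal path does almost all the work, and the remainder is handled in two parts rather than across all $t$.
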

\begin{proof}
Suppose $p \equiv r \bmod t$ for some $0 \leq r \leq t-1$. If $r = 0$, then we can choose $w_0=w_0'$ in Part 2 of Lemma \ref{lem:countpath2} in at least $(1-2\epsilon)n$ ways. Then this lemma easily implies that the number of cycles of length $p$ is at least $(1-2\epsilon)\epsilon n^2 (d-5\sqrt{\epsilon})^{p-1} (1-2\sqrt{\epsilon})^{p-2} \prod_{i=1}^{p-2} (n-\lfloor i/t \rfloor)$, which is stronger than the required bound.
We may therefore assume that $p$ is not divisible by $t$, i.e., that $r >0$. 
Define an auxiliary constant $h$ by $h= r+t$ if $r$ is odd, $h = r$ if $r > 2$ and even, and $h =2+2t$ if $r = 2$. Note that $h$ is always a positive even integer with $4 \leq h \leq  2t+ 2$, so a cycle of length $p$ can be constructed by combining an even path $L_1$ of length $h$ alternating between $V_0$ and $V_{t-1}$ with a $(V_0, \dots, V_{t-1})$-transversal path $L_2$ of length $p-h$ with the same end vertices as $L_1$. Since $(p-h)$ is divisible by $t$, the path  $L_2$ will use exactly $(p-h)/t$ vertices in each of the $V_i$. 

By Lemma \ref{lem:epsregprop} (i), the set $U_0$ of vertices in $V_0$ with at least $(d -\epsilon)|V_1|$ neighbors in $V_1$ and at least $(d -\epsilon)|V_{t-1}|$ neighbors in $V_{t-1}$ has size at least $(1-2\epsilon)|V_0|$. We now fix two vertices $u, v \in U_0$ and bound the number of paths of the types $L_1$ and $L_2$ described above with end vertices $u$ and $v$.

We first bound the number of paths of type $L_1$ with end vertices $u$ and $v$. Since $h \geq 4$ and $h \leq 2t + 2 \leq 2(1-3\sqrt{\epsilon})n$, we may apply Part 2 of Lemma \ref{lem:countpath2} to $(V_0, V_{t-1})$ to conclude that the number of paths of length $h$ alternating between $V_0$ and $V_{t-1}$ with end vertices $u$ and $v$ is at least 
\begin{equation}
(d-5\sqrt{\epsilon})^{h-1} (1-2\sqrt{\epsilon})^{h-2} (\epsilon n)\prod_{i=1}^{h-2} (n-\lfloor i/2 \rfloor). \label{eq:part1}
\end{equation}

We now bound the number of paths of type $L_2$ available for each such $L_1$. 
For a given $L_1$, remove its $h-1$ interior vertices from $V_0$ and $ V_{t-1}$, calling the updated vertex sets $V'_0$ and $V'_{t-1}$, respectively. Since $h \leq 2t+2$, we have  $|V_0'| \geq |V_0| - (t+1) \geq (1-\epsilon/2)|V_0|$ and, similarly, $|V_{t-1}'| \geq (1-\epsilon/2)|V_{t-1}|$. Hence, by Lemma \ref{lem:epsregprop} (ii), each of the pairs $(V_0', V_{t-1}')$, $(V_0', V_1)$, and $(V_{t-1}', V_{t-2})$ is $2\epsilon$-regular with density at least $d-\epsilon$. Furthermore, $u$ and $v$ each have at least $(d-\epsilon)|V_{t-1}| - (t+1) \geq (d-2\epsilon)|V_{t-1}| \geq (d-2\epsilon)|V_{t-1}'|$ neighbors in $V_{t-1}'$. 
Since also
\[ 4 \leq p-h \leq t(1-5\sqrt{\epsilon})n \leq t(1-3\sqrt{2\epsilon})(1-\epsilon/2)n,\]
we may apply Part 2 of Lemma \ref{lem:countpath2} with $\epsilon$, $\ell$, $d$, and $n$ replaced by $2\epsilon$, $p-h$, $d-\epsilon$, and $(1-\epsilon/2)n$, respectively, to conclude that the number of $(V'_0, V_1, \dots, V_{t-2}, V'_{t-1})$-transversal paths of length $p-h$ with end vertices $u$ and $v$, each of which is a valid choice for $L_2$, is at least 
\begin{align} 
& (d-\epsilon - 5\sqrt{2\epsilon})^{p-h-1} (1-2\sqrt{2\epsilon})^{p-h-2} (2\epsilon (1-\epsilon/2)n)\prod_{i=1}^{p-h-2} ((1-\epsilon/2)n-\lfloor i/t \rfloor) \nonumber \\ 
& \geq  (d-10\sqrt{\epsilon})^{p-h-1} (1-3\sqrt{\epsilon})^{p-h-2} (\epsilon n)\prod_{i=1}^{p-h-2} ((1-\epsilon/2)n-\lfloor i/t \rfloor) \nonumber\\
& \geq (d-10\sqrt{\epsilon})^{p-h-1} (1-3\sqrt{\epsilon})^{p-h-2} (\epsilon n) (1-\epsilon/2 - \sqrt{\epsilon/2})^{p-h-2} \prod_{i=1}^{p-h-2} (n-\lfloor i/t \rfloor) \nonumber \\
& \geq (d-10\sqrt{\epsilon})^{p-h-1} (1-3\sqrt{\epsilon})^{p-h-2} (\epsilon n) (1- 2\sqrt{\epsilon})^{p-h-2} \prod_{i=1}^{p-h-2} (n-\lfloor i/t \rfloor) \nonumber \\
& \geq (d-10\sqrt{\epsilon})^{p-h-1} (1-3\sqrt{\epsilon})^{2(p-h-2)} (\epsilon n)  \prod_{i=1}^{p-h-2} (n-\lfloor i/t \rfloor),
\label{eq:part2}
\end{align}
where the second inequality holds because $(1-\epsilon/2)n - \lfloor i/t \rfloor  \geq (1-\epsilon/2 - \sqrt{\epsilon/2}) (n - \lfloor i/t \rfloor )$ for $i \leq t(1-2\sqrt{\epsilon/2})n$. 
 
Therefore, since the number of choices for $u$ and $v$ is at least $\frac{1}{2} (1-2\epsilon)|V_0| ((1-2\epsilon)|V_0|-1)$, we may combine \eqref{eq:part1} and \eqref{eq:part2} to conclude that the number of cycles of length $p$ is at least
 \begin{align*}
  & \frac{1}{2} (1-2\epsilon)|V_0| ((1-2\epsilon)|V_0|-1) \cdot (d-5\sqrt{\epsilon})^{h-1} (1-2\sqrt{\epsilon})^{h-2} (\epsilon n)\prod_{i=1}^{h-2} (n-\lfloor i/2 \rfloor)  \\
  & \ \ \ \ \ \cdot (d-10\sqrt{\epsilon})^{p-h-1} (1-3\sqrt{\epsilon})^{2(p-h-2)} (\epsilon n)  \prod_{i=1}^{p-h-2} (n-\lfloor i/t \rfloor) \\
  &\geq  \frac{1}{4} (1-2\epsilon)^2 n^2  (d-10\sqrt{\epsilon})^{p-2}(1-3\sqrt{\epsilon})^{2p-h-2}(\epsilon n)^2  \prod_{i=1}^{p-4} (n-\lfloor i/t \rfloor) \\
  & \geq  \frac{\epsilon^2}{4}  n^4  (d-10\sqrt{\epsilon})^{p-2}(1-3\sqrt{\epsilon})^{2p} \prod_{i=1}^{p-4} (n-\lfloor i/t \rfloor),
 \end{align*}
as required.
\end{proof}

Finally, we can show that Theorem~\ref{thm:main} holds for colorings satisfying Case 1 of Lemma \ref{lem:main2}.

\begin{proof}[Proof of Theorem \ref{thm:main} for colorings satisfying Case 1 of Lemma \ref{lem:main2}.]
Suppose, for concreteness, that $\epsilon = 10^{-30}$ and let $V_0, \dots, V_{t-1}$ be as in Case 1 of Lemma~\ref{lem:main2} with $n = 2k - 1$, where $k$ (and hence $n$) is a sufficiently large odd integer. 
We wish to apply Lemma~\ref{lem:countcycle1} to show there are many cycles of length $k = (n+1)/2$. 
To confirm that the conditions of Lemma \ref{lem:countcycle1} hold, we only have to check \eqref{eqn:q}, i.e., that
\begin{equation}
2t+6 \leq k \leq t(1-5\sqrt{\epsilon}) \lfloor n/M \rfloor.\label{eqn:knM}
\end{equation} 
By \eqref{eqn:reducedcycle}, $(1/2 + \alpha) M \geq t > (1/2 + \alpha) M - 2$, so it suffices to show that
\[k = (n+1)/2 \geq 2(1/2+\alpha)M+6\] 
and
\[ k \leq \left(  (1/2 + \alpha) M - 2 \right)  (1-5\sqrt{\epsilon})(n/M-1).\] 
The first inequality easily holds for $n$ sufficiently large in terms of $\epsilon$, while the second inequality holds because 
\begin{align*}
k = (n+1)/2 \leq  (1+\epsilon)n/2  \leq (1/2 + \alpha/2)M (1-5\sqrt{\epsilon})(1-\epsilon)(n/M), 
\end{align*}
where we used that $\alpha=20\sqrt{\epsilon}$ and 
$
(1+\alpha)(1-5\sqrt{\epsilon})(1-\epsilon) \geq (1 + 20\sqrt{\epsilon})(1-6\sqrt{\epsilon}) >  1+ \epsilon
$.
Hence, since $M \geq 2/\alpha$ and assuming $n \geq M/\epsilon$,
\begin{align*}
k \leq &  (1/2 + \alpha/2)M (1-5\sqrt{\epsilon})(1-\epsilon)(n/M) 
\leq  \left(  (1/2 + \alpha) M - 2 \right)  (1-5\sqrt{\epsilon})(n/M-1),
\end{align*}
as required.

 We may therefore apply Lemma \ref{lem:countcycle1} with parameters $\epsilon$, $p$, $d$, and $n$ replaced by $\epsilon$, $k$, $d$, and $\lfloor n/M \rfloor$, respectively, to conclude that the number of cycles of length $k$ is at least 
 \begin{equation} 
 \frac{\epsilon^2}{4}  ( \lfloor n/M \rfloor)^4  (d-10\sqrt{\epsilon})^{k-2}(1-3\sqrt{\epsilon})^{2k} \prod_{i=1}^{k-4} ( \lfloor n/M \rfloor-\lfloor i/t \rfloor). \label {eq:cyclek}
 \end{equation}
Since $ \lfloor n/M \rfloor \geq k/t$  by  (\ref{eqn:knM}), the last term in 
 (\ref{eq:cyclek}) satisfies
\begin{equation*}
\prod_{i=1}^{k-4} ( \lfloor n/M \rfloor-\lfloor i/t \rfloor)  \geq
\prod_{i=1}^{k-4} ( \lfloor n/M \rfloor- i/t) \geq
t^{-(k-4)} (k-4)!.
  \end{equation*} 
Therefore, (\ref{eq:cyclek}) is lower bounded by 
  \begin{align*}
 &  \frac{\epsilon^2}{4}  ( \lfloor n/M \rfloor)^4  (d-10\sqrt{\epsilon})^{k-2}(1-3\sqrt{\epsilon})^{2k} t^{-(k-4)}(k-4)! \\
 & \geq \frac{\epsilon^2}{4} \frac{n^4}{(2M)^4} (d-10\sqrt{\epsilon})^{k-2}(1-3\sqrt{\epsilon})^{2k}t^{-(k-4)}  ((k-4)/e)^{k-4}  \\
 & \geq \frac{\epsilon^2}{4(2M)^4}  (d-10\sqrt{\epsilon})^{k-2}(1-3\sqrt{\epsilon})^{2k}t^{-(k-4)}  ((k-4)/e)^{k}.
  \end{align*}
Hence, since $\epsilon$ is a constant, $d \geq 11 \sqrt{\epsilon}$, and $M$ and $t$ are bounded in terms of $\epsilon$, there is a constant $c_1$ depending only on $\epsilon$ such that the number of cycles of length $k$ is at least $(c_1 k)^k$, as required.
\end{proof}

\subsection{Theorem~\ref{thm:main} for colorings satisfying Case 2 of Lemma~\ref{lem:main2}} \label{subsec:case2odd}

The proof of Theorem~\ref{thm:main} for colorings satisfying Case 2 of Lemma~\ref{lem:main2} has several cases. To make the presentation cleaner, we first prove some simple claims.

\begin{claim}\label{claim:redgeneral}
Let $S$ and $T$ be two disjoint vertex sets in a graph $F$ such that any two vertices in $S$ have at least $s$ common neighbors in $T$. If there is an edge within $S$, then the number of cycles of length $l$ is at least $\left( s-l/2 +3/2\right)^{(l-1)/2}  (|S| - (l-1)/2)^{(l-3)/2}$ for any odd integer $3 \leq l \leq \min(2s+1,  2|S|-1)$.
\end{claim}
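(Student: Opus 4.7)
The plan is to build the cycles explicitly by fixing the $S$--edge $uv$ guaranteed to exist and completing it to a cycle of length $l$. Since $l$ is odd, the $l-1$ remaining edges must alternate between $S$ and $T$, giving a cycle of the form $u = w_0, v = w_1, w_2, w_3, \ldots, w_{l-1}, w_0$ with $w_i \in S$ for odd $i$ (and $i = 0$) and $w_i \in T$ for even $i \geq 2$. Altogether $(l+1)/2$ vertices sit in $S$ (namely $u, v$ plus $(l-3)/2$ new vertices) and $(l-1)/2$ sit in $T$.

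First I would fix an arbitrary edge $uv$ within $S$, set $w_0 = u$, $w_1 = v$, and greedily choose the remaining $S$--vertices $w_3, w_5, \ldots, w_{l-2}$ to be distinct arbitrary elements of $S \setminus \{u,v\}$. When the $j$--th such choice is made, at most $j+1$ vertices of $S$ have been used, so there are at least $|S| - (l-1)/2$ options available throughout; this gives a factor of at least
\[
\bigl(|S| - (l-1)/2\bigr)^{(l-3)/2}.
\]
The bound $l \leq 2|S|-1$ ensures this factor is positive.

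Next I would fill in the $T$--vertices $w_2, w_4, \ldots, w_{l-1}$ in order, where $w_{2j}$ is chosen to be a common $T$--neighbor of the two flanking $S$--vertices $w_{2j-1}$ and $w_{2j+1}$ (with the convention $w_l := w_0$). By hypothesis there are at least $s$ such common neighbors; at most $j-1 \leq (l-3)/2$ have been used earlier, so at least
\[
s - (l-3)/2 \;=\; s - l/2 + 3/2
\]
options remain at every step, which is positive because $l \leq 2s+1$. This yields a factor of at least $(s - l/2 + 3/2)^{(l-1)/2}$.

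Multiplying the two lower bounds gives the claimed estimate, provided I argue that each cycle is counted at most once. For this I would note that once the ordered pair $(w_0, w_1) = (u, v)$ is fixed, the entire traversal $w_0, w_1, \ldots, w_{l-1}$ is determined by the cycle, so distinct tuples of choices produce distinct cycles. There is no real obstacle here; the only minor care needed is in the bookkeeping of how many $S$-- and $T$--vertices have been used at each step, and in checking that the inequalities $l \leq 2s+1$ and $l \leq 2|S|-1$ are exactly what is needed to keep all factors positive.
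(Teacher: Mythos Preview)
Your proposal is correct and follows essentially the same approach as the paper's proof: fix the $S$--edge, first choose the remaining $S$--vertices arbitrarily, and then fill in each $T$--vertex as a common neighbor of its two flanking $S$--vertices, bounding the number of options at each step. The indexing differs slightly (the paper writes the path as $u, v_1, u_1, \ldots, v_{(l-1)/2}, u'$ rather than your $w_0, \ldots, w_{l-1}$), but the counting and the resulting bounds are identical.
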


\begin{proof}
Suppose that $(u,u')$ is an edge in $S$. To construct cycles of length $l$, we will find paths $P$ of the form $u, v_1, u_1, v_2, u_2, \dots, v_{(l-1)/2}, u_{(l-1)/2} = u'$ alternating between $S$ and $T$ with end vertices $u$ and $u'$. Each such path together with the edge $(u,u')$ clearly gives rise to a cycle of length $l$. 

To estimate the number of paths $P$ of the required form, 
suppose that $u, u_1, u_2, \dots, u_{(l-1)/2-1}$, $u_{(l-1)/2} = u'$ is a fixed sequence of distinct vertices in $S$. Note that any of the $s$ common neighbors of $u$ and $u_1$ in $T$ can be chosen as $v_1$.  More generally, given choices for $v_1, \dots, v_{i-1}$, the number of choices for $v_i$ is at least $s - (i-1)$ for $1 \leq i \leq (l-1)/2$, since we may pick any vertex in the common neighborhood of $u_{i-1}$ and $u_{i}$ in $T$ except $v_1, \dots, v_{i-1}$. Therefore, given $u, u_1, u_2, \dots, u_{(l-1)/2-1}$, $u_{(l-1)/2} = u'$, the number of choices for $P$ is at least 
\[ \prod_{i=1}^{(l-1)/2} (s - (i-1)) \geq  \left( s-((l-1)/2-1)) \right)^{(l-1)/2} = \left( s-l/2 +3/2 \right)^{(l-1)/2} .\]
Since the number of choices for $u_1, \dots, u_{(l-1)/2-1}$ is 
\[(|S|-2)(|S|-3) \dots (|S| - (l-1)/2) \geq (|S| - (l-1)/2)^{(l-3)/2},\] 
the claim follows.
\end{proof}

\begin{claim}\label{claim:ABedgegeneral}
Let  $S$ and $T$ be two disjoint cliques in a graph $F$. Suppose that there are two vertex-disjoint paths $P_1$ and $P_2$ such that each path has length at most $2$, has one end vertex in $S$, the other end vertex in $T$, and the rest of the vertices outside $S \cup T$. Then the number of cycles of length $l$ is at least $(((l-1)/2-3)/e)^{l-6}$ for any integer $7 \leq l \leq \min(2|S|-1, 2|T|-1)$.
\end{claim}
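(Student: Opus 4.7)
The plan is to build every cycle of length $\ell$ out of four pieces. Let $a_i \in S$ and $b_i \in T$ denote the endpoints of $P_i$ and let $\ell_i \in \{1,2\}$ be its length; since $P_1, P_2$ are vertex-disjoint and their internal vertices lie outside $S \cup T$, we automatically get $a_1 \neq a_2$ and $b_1 \neq b_2$. I would concatenate $P_1$ (from $a_1$ to $b_1$), a path $Q_T$ of length $k_T$ inside the clique $T$ from $b_1$ to $b_2$, the reverse of $P_2$ (from $b_2$ to $a_2$), and a path $Q_S$ of length $k_S$ inside the clique $S$ from $a_2$ to $a_1$. With the internal vertices of $Q_S$ drawn from $S \setminus \{a_1, a_2\}$ and those of $Q_T$ drawn from $T \setminus \{b_1, b_2\}$, the four pieces share only the stated endpoints, so the union is genuinely a cycle of length $\ell_1 + \ell_2 + k_S + k_T$.

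Next, to hit total length $\ell$, I would set $k_S = \lceil (\ell - \ell_1 - \ell_2)/2 \rceil$ and $k_T = \lfloor (\ell - \ell_1 - \ell_2)/2 \rfloor$. Since $\ell \geq 7$ and $\ell_1 + \ell_2 \leq 4$, both $k_S, k_T \geq 1$, while the hypothesis $\ell \leq \min(2|S|-1,2|T|-1)$ forces $k_S, k_T \leq (\ell-1)/2 \leq \min(|S|,|T|)-1$, so paths of these lengths fit inside each clique. For fixed endpoints, the number of paths of length $k_S$ in the clique $S$ from $a_1$ to $a_2$ is $(|S|-2)(|S|-3)\cdots(|S|-k_S)$, obtained by ordering the $k_S-1$ internal vertices drawn from $S \setminus \{a_1,a_2\}$. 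Using $|S|-i \geq k_S+1-i$, this quantity is at least $(k_S-1)!$, and identically the count for $Q_T$ is at least $(k_T-1)!$. Hence the number of cycles of length $\ell$ produced is at least $(k_S-1)!\,(k_T-1)!$.

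To conclude, I would observe that the balanced split yields $\min(k_S-1, k_T-1) \geq \lfloor (\ell-4)/2 \rfloor - 1 \geq (\ell-7)/2 = (\ell-1)/2 - 3$ in the worst case $\ell_1 + \ell_2 = 4$, while $(k_S-1)+(k_T-1) = \ell - \ell_1 - \ell_2 - 2 \geq \ell-6$. Applying Stirling's inequality $m! \geq (m/e)^m$ to each factorial and using $((k_S-1)/e)^{k_S-1} \geq ((k_T-1)/e)^{k_S-1}$ (valid since $k_S \geq k_T$), one obtains
\[
(k_S-1)!\,(k_T-1)! \;\geq\; \Bigl( \tfrac{k_T-1}{e} \Bigr)^{\!k_S+k_T-2} \;\geq\; \Bigl( \tfrac{(\ell-1)/2-3}{e} \Bigr)^{\!\ell-6},
\]
where in the last step, for $\ell \geq 13$ the base is at least $1$ so shrinking the exponent from $k_S+k_T-2$ down to $\ell-6$ is legal, and for $7 \leq \ell \leq 12$ the target bound is already less than $1$ and is verified by the existence of even a single cycle from the construction above.

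The one place requiring real care is the case-by-case verification that the balanced split fits inside the two cliques and that $\min(k_S-1, k_T-1)$ tracks $(\ell-1)/2-3$ across all values of $\ell_1+\ell_2 \in \{2,3,4\}$ and both parities of $\ell$; once that bookkeeping is in place, the remainder is routine path-counting in a clique together with Stirling.
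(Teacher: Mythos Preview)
Your proposal is correct and follows essentially the same approach as the paper: build each cycle by concatenating $P_1$, a path inside the clique $T$, $P_2$, and a path inside the clique $S$, then count the paths in each clique by falling factorials and finish with $m! \ge (m/e)^m$. The only cosmetic difference is your choice of split lengths $k_S = \lceil(\ell-\ell_1-\ell_2)/2\rceil$, $k_T = \lfloor(\ell-\ell_1-\ell_2)/2\rfloor$, whereas the paper takes $s = \lfloor \ell/2\rfloor$ and uses $s-\ell_1$ and $\ell-s-\ell_2$; both lead to the same bound after the identical small-$\ell$ observation that the target is below $1$.
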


\begin{proof}
Suppose that  $P_1$ has length $l_1$ and endpoints $a_1 \in S$ and $b_1 \in T$, while $P_2$ has length $l_2$ and endpoints $a_2 \in S$ and $b_2 \in T$. 
Let $s = \lfloor l/2 \rfloor$. We will construct cycles of length $l$ by concatenating the following four paths:  (1) a path $L_1$ of length $s - l_1$ in $S$ with end vertices $a_1$ and $a_2$, (2) the path $P_2$, (3) a path $L_2$ in $T$ of length $l-s-l_2$ with end vertices $b_1$ and $b_2$, and (4) the path $P_1$. This process clearly yields a cycle of length $l$. 

Since $S$ is a clique, we can always find paths $L_1$ of length $s-l_1$ in $S\setminus \{a_1, a_2\}$ with end vertices $a_1$ and $a_2$. 
Indeed, the number of such $L_1$ is exactly the number of length-$(s-l_1-1)$ ordered sequences of vertices in $S \setminus \{a_1, a_2\}$. Since $|S| - 2 \geq s-l_1 -1$, such a sequence exists. Furthermore, the number of such sequences is  exactly $(|S| - 2)!/(|S| - 2 -(s-l_1-1))! \geq  (s-l_1-1)! \geq ((s-l_1-1)/e)^{s-l_1-1}$, where we used the inequality $(x-1)\cdots (x-y) \geq y! \geq (y/e)^y$ for positive integers $x, y$ with $x \geq y+1$. Similarly, the number of choices for $L_2$ is at least $((l-s-l_2-1)/e)^{l-s-l_2-1}$. In total, the number of cycles of length $l$ is at least
$ ((s-l_1-1)/e)^{s-l_1-1} ((l-s-l_2-1)/e)^{l-s-l_2-1}$. Since $l_1, l_2 \leq 2$ and $l-s\geq s$, the quantity above is at least $((s-3)/e)^{s-l_1-1}  ((s-3)/e)^{l-s-l_2-1}$. If $s-3 \geq e$, then the previous quantity is at least $((s-3)/e)^{l-6}$. Otherwise, we counted a positive integer number of cycles of length $l$, which is at least the bound in the claim. 
\end{proof}

\begin{claim}\label{claim:PbothABgeneral}
Let $S$ and $T$ be two disjoint vertex sets in a graph $F$. Suppose that $w \in S$ and there is a complete bipartite graph between $S \setminus \{w\}$ and $T$ and at least one edge between $w$ and $T$ (so, in particular, there may be a complete bipartite graph between $S$ and $T$). If there is a path $P'$ of length two with one end vertex in $S$, the other end vertex in $T$, and the rest of the vertices outside $S \cup T$, then the number of  cycles of length $l$ is at least $((l-5)/2e)^{l-5}$ for any odd integer $l$ with $7 \leq l \leq \min(2|S|+1, 2|T|+1)$. 
\end{claim}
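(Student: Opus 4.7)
The plan is to build cycles of length $l$ by extending the two-path $P'$ with an alternating path of length $l-2$ between $S$ and $T$ that exploits the complete bipartite structure between $S \setminus \{w\}$ and $T$. Writing $P' = a - v - b$ with $a \in S$, $b \in T$, $v \notin S \cup T$, and setting $k = (l-3)/2$, each cycle I aim to construct has the form
\[
a,\ v,\ b,\ s_1,\ t_1,\ s_2,\ t_2,\ \ldots,\ s_k,\ t_k,\ a,
\]
where the vertices after $v$ alternate strictly between $S$ and $T$. The crucial point is that the external vertex $v$ is what breaks the parity of the cycle, since any cycle staying inside $S \cup T$ would be bipartite and hence of even length.

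First I would split the argument into two cases according to whether $a \neq w$ or $a = w$. In the case $a \neq w$, every edge of the cycle other than those coming from $P'$ is automatic from the complete bipartite graph provided each $s_i$ lies in $S \setminus \{w\}$; so I would count valid cycles by choosing $s_1, \ldots, s_k$ as an ordered sequence of $k$ distinct elements of $S \setminus \{w,a\}$ and $t_1, \ldots, t_k$ as an ordered sequence of $k$ distinct elements of $T \setminus \{b\}$. In the case $a = w$, the closing edge $t_k a$ is no longer automatic and instead requires $t_k \in N_T(w)$; here I would fix $t_k$ to be the guaranteed $T$-neighbor of $w$ (or another such neighbor distinct from $b$) and then count the remaining choices from $S \setminus \{w\}$ and $T \setminus \{b, t_k\}$ analogously.

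Using the hypothesis $|S|, |T| \geq (l-1)/2$, in each case the count is a product of two falling factorials in $|S|$ and $|T|$ whose total length is essentially $2k = l-3$. Applying Stirling's inequality $m! \geq (m/e)^m$ to each factor, the product is bounded below by a quantity of order $((l-5)/(2e))^{l-5}$, matching the stated bound once the constant shifts arising from the forbidden vertices $a$, $w$, $b$ are absorbed into the base. The main obstacle I expect is treating the exceptional vertex $w$ correctly, especially when $|S|$ is close to its lower bound so that $w$ cannot simply be omitted from the alternating path. This forces careful bookkeeping in the case split $a = w$ versus $a \neq w$, and in the latter case one must extract the Stirling bound sharply enough to land at the exponent $l-5$ rather than something like $l-3$. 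Once the case analysis is in place, however, the remaining step is a routine Stirling estimate on the falling factorials.
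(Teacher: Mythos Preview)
Your proposal is essentially the same as the paper's proof: concatenate $P'$ with an alternating $S$--$T$ path of length $l-2$, split into the cases $a = w$ versus $a \neq w$, pin down $w$'s guaranteed $T$-neighbour in the former case, and bound the resulting falling factorials via $m! \geq (m/e)^m$. The only cosmetic difference is that the paper places $w$'s fixed neighbour $x$ at the start of the alternating path rather than at the end, and then unifies both cases into a single count of paths of length $l-4$ in $S\setminus\{w\}$ and $T\setminus\{x\}$, which directly yields the exponent $l-5$ you are aiming for.
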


\begin{proof}
Suppose the two end vertices of $P'$ are $a\in S$ and $b\in T$. We will construct  cycles of length $l$ by concatenating $P'$ with paths $P$ of length $l-2$ alternating between $S$ and $T$ with end vertices $a$ and $b$. Fix a neighbor $x$ of $w$ in $T$. If $a = w$, each path $P$ will start with $w$ and then $x$ before returning to some $a' \in S$, while if $a \neq w$, we will avoid $w$ while building our paths. In either case, a lower estimate for the number of cycles of length $l$ is given by estimating the number of paths of length $l - 4$ starting at a fixed $a' \neq w$ and ending at $b$ alternating between $S \setminus \{w\}$ and $T \setminus \{x\}$.

Since there is a complete bipartite graph between $S \setminus \{w\}$ and $T \setminus \{x\}$, any length-$(l-5)/2$ sequence of ordered  vertices in $S\setminus \{a', w\}$  and any length-$(l-5)/2$ sequence of ordered vertices in $T \setminus \{b, x\}$ give rise to a relevant path by alternating between the two sequences as interior vertices. Such sequences exist because $|S| -2 \geq (l-5)/2$ and $|T| - 2 \geq (l-5)/2$.
Thus, the number of choices for the path is at least the product of the number of such sequences in 
$S \setminus \{a', w\}$ and $T \setminus \{b, x\}$, which is 
\begin{align*} 
\frac{(|S|-2)!}{(|S| - (l-5)/2) - 2)!} \cdot \frac{(|T|-2)!}{(|T| - (l-5)/2) - 2)!}
& \geq  ((l-5)/2)! ((l-5)/2)! \\
& \geq ((l-5)/2e)^{l-5},
\end{align*}
where we again used that $(x-1)\cdots (x-k) \geq k! \geq (k/e)^k$ for positive integers $x, k$ with $x \geq k+1$. 
\end{proof}

\begin{claim}\label{claim:tworedABgeneral}
Let $S$ and $T$ be two disjoint vertex sets in a graph $F$. If there are no two vertex-disjoint edges between $S$ and $T$, then, by removing at most one vertex from $S\cup T$, there is no edge between $S$ and $T$.
\end{claim}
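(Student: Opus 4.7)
The plan is a short case analysis on the bipartite graph $G$ of edges between $S$ and $T$. The hypothesis is exactly that the matching number of $G$ is at most one, so the goal is to show that the vertex cover number is also at most one, i.e., all edges of $G$ share a common endpoint.

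First, if $G$ has no edges, we are done by removing zero vertices. Otherwise, fix any edge $e = ab$ of $G$ with $a \in S$ and $b \in T$. The claim I want to establish is that either every edge of $G$ contains $a$, or every edge of $G$ contains $b$; given this, removing the common vertex eliminates all edges between $S$ and $T$.

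To prove the dichotomy, suppose for contradiction that it fails. Then there exists an edge $e_1 = a b_1$ incident to $a$ with $b_1 \neq b$ and an edge $e_2 = a_2 b$ incident to $b$ with $a_2 \neq a$. (Any edge avoiding $a$ must, by the previous paragraph's observation that every edge shares a vertex with $e$, be incident to $b$; similarly for edges avoiding $b$. Hence if neither case of the dichotomy holds, such $e_1$ and $e_2$ must exist.) But then $e_1 = ab_1$ and $e_2 = a_2 b$ satisfy $\{a, b_1\} \cap \{a_2, b\} = \emptyset$, giving two vertex-disjoint edges between $S$ and $T$, contradicting the hypothesis.

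The only mild subtlety is verifying the intermediate observation that every edge of $G$ shares a vertex with the fixed edge $e = ab$: otherwise that edge together with $e$ would be two vertex-disjoint edges between $S$ and $T$. This is immediate, so there is no real obstacle here — the claim is essentially the bipartite König's theorem specialized to matching number one.
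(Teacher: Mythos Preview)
Your proof is correct and essentially the same as the paper's: both show that the bipartite edge set between $S$ and $T$ has a vertex cover of size at most one. The paper phrases the case analysis asymmetrically (casing on how many vertices in $S$ have a neighbor in $T$), while you fix an edge $ab$ and argue symmetrically that all edges pass through $a$ or all pass through $b$; the underlying argument is identical.
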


\begin{proof}
If there is no vertex in $S$ with a neighbor in $T$, the claim trivially holds. 
If there is exactly one vertex $a \in S$ with neighbors in $T$, then there is no edge between $S \setminus \{a\}$ and $T$. If there is more than one vertex in $S$ with a neighbor in $T$, then all of them have the same neighbor $b \in T$ and there is no edge between $S$ and $T \setminus \{b\}$. In each case, the claim follows.
\end{proof}

We are now ready to prove Theorem \ref{thm:main} for colorings satisfying Case 2 of Lemma \ref{lem:main2}.

\begin{proof}[Proof of Theorem \ref{thm:main} for colorings satisfying Case 2 of Lemma \ref{lem:main2}.]
Suppose again that $\epsilon = 10^{-30}$ and $n = 2k - 1$ for $k$ a sufficiently large odd integer, but we now have an extremal coloring of $K_n$ with parameter $\lambda = 300\sqrt{\alpha}$ and vertex partition $A \cup B$, as in Case 2 of Lemma~\ref{lem:main2}.
Without loss of generality, we will assume that the red densities within $A$ and $B$ are both at least $1-\lambda$ and the blue density between $A$ and $B$ is at least $1-\lambda$, where $|A|, |B| \geq (1/2 - \lambda)n$. 

We first conduct a simple cleaning-up procedure. 

\begin{claim}
\label{claim:updateEC}
There is a vertex partition of $K_n$ as $A'\cup B' \cup X \cup Y$ satisfying the following conditions:
\begin{enumerate}
\item $A = A' \cup X$ and $B = B' \cup Y$.
\item $|X | \leq 2\sqrt{\lambda}|A|$ and $|Y| \leq 2\sqrt{\lambda}|B|$.
\item $|A'| \geq (1/2 - 2\sqrt{\lambda})n$ and $|B'| \geq (1/2 - 2\sqrt{\lambda})n$. 
\item Each vertex in $A'$ has red degree at least $ (1-3\sqrt{\lambda})|A|$ in $A'$ and blue degree at least $ (1-3\sqrt{\lambda})|B|$ in $B'$. 
Similarly, each vertex in $B'$ has red degree at least $ (1-3\sqrt{\lambda})|B|$ in $B'$ and blue degree at least $ (1-3\sqrt{\lambda})|A|$ in $A'$. 
\end{enumerate} 
\end{claim}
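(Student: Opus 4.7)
The plan is a standard double-counting cleanup. I would define $X$ to be the set of ``bad'' vertices in $A$, namely those $v \in A$ that violate at least one of the following: (i) $v$ has at most $\sqrt{\lambda}\,|A|$ non-red neighbors in $A$, or (ii) $v$ has at most $\sqrt{\lambda}\,|B|$ non-blue neighbors in $B$. Analogously I would define $Y \subseteq B$ using the red density within $B$ and the blue density between $B$ and $A$. Setting $A' = A \setminus X$ and $B' = B \setminus Y$ then immediately gives condition~1 of the claim.

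For condition~2, I would bound $|X|$ by splitting $X = X_1 \cup X_2$, where $X_1$ consists of the vertices violating (i) and $X_2$ of those violating (ii). Since the red density in $A$ is at least $1-\lambda$, the number of non-red edges in $A$ is at most $\lambda\binom{|A|}{2}$, so the sum of the non-red degrees in $A$ of vertices of $A$ is at most $\lambda|A|(|A|-1)$. Combined with the threshold defining $X_1$, this gives $|X_1| \leq \sqrt{\lambda}\,|A|$. For $X_2$, the number of non-blue edges between $A$ and $B$ is at most $\lambda|A||B|$, and an analogous argument yields $|X_2| \leq \sqrt{\lambda}\,|A|$. Hence $|X| \leq 2\sqrt{\lambda}\,|A|$, and symmetrically $|Y| \leq 2\sqrt{\lambda}\,|B|$.

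For condition~3, I would use $|A'| = |A| - |X| \geq (1 - 2\sqrt{\lambda})|A|$, together with $|A| \geq (1/2 - \lambda)n$, to get
\[
|A'| \geq (1 - 2\sqrt{\lambda})(1/2 - \lambda)n \geq (1/2 - 2\sqrt{\lambda})n
\]
for $\lambda$ sufficiently small (here $\lambda = 300\sqrt{\alpha}$ with $\alpha = 20\sqrt{\epsilon}$ and $\epsilon < 10^{-20}$, so this is automatic). The symmetric argument handles $|B'|$. For condition~4, any $v \in A'$ satisfies $v \notin X_1$, so its red degree in $A$ is at least $(|A|-1) - \sqrt{\lambda}\,|A|$. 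Deleting the at most $2\sqrt{\lambda}\,|A|$ vertices of $X$ removes at most that many red neighbors, leaving red degree in $A'$ of at least $(1 - 3\sqrt{\lambda})|A| - 1$; for $n$ sufficiently large this absorbs into the stated bound $(1 - 3\sqrt{\lambda})|A|$ after a harmless tightening of the threshold defining $X_1$ (e.g., using $\sqrt{\lambda}\,|A| - 1$ in place of $\sqrt{\lambda}\,|A|$). Likewise, $v \notin X_2$ gives blue degree at least $(1-\sqrt{\lambda})|B|$ in $B$, and removing $Y$ costs at most $2\sqrt{\lambda}\,|B|$ more, leaving blue degree in $B'$ of at least $(1-3\sqrt{\lambda})|B|$. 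The symmetric statement for $v \in B'$ follows identically.

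There is essentially no real obstacle: the entire argument is a routine Markov-type double-counting against the two density constraints of Definition~\ref{def:ec2}, and the only point requiring any care is the off-by-one arithmetic in the within-class degrees, which is absorbed either by a trivially tighter threshold or by the assumption that $n$ is large.
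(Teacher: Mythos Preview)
Your proposal is correct and follows essentially the same approach as the paper: define the bad set in $A$ as those vertices with too small red degree in $A$ or too small blue degree in $B$, bound each part by $\sqrt{\lambda}\,|A|$ via the density assumption, and then subtract the removed vertices to get the degree bounds in $A'$ and $B'$. The paper's write-up is terser and simply absorbs the off-by-one issue you flagged, but the argument is the same.
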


\begin{proof}
Suppose that there are $x|A|$ vertices in $A$ whose red degree in $A$ is at most $(1-\sqrt{\lambda})|A|$. Then,  
\[
x|A|(1-\sqrt{\lambda})|A| + (1-x)|A||A| \geq (1-\lambda) |A|^2,
\]
which implies that $x \leq \sqrt{\lambda}$. 
Similarly, there are at most $\sqrt{\lambda}|A|$ vertices in $A$ whose blue degree in $B$ is at most $(1-\sqrt{\lambda})|B|$. 
Letting $X$ be the union of these two bad sets of vertices, we see that $|X | \leq 2\sqrt{\lambda}|A|$. We define $Y \subset B$ similarly, again noting that $|Y| \leq 2\sqrt{\lambda}|B|$. Letting $A' = A \setminus X$ and $B' = B \setminus Y$, we see that items 1 and 2 hold.
To verify item 3, note that $|A'| = |A| - |X| \geq (1-2\sqrt{\lambda})|A|$. Since $|A| \geq (1/2 - \lambda) n$, we have 
$$|A'| \geq (1-2\sqrt{\lambda})(1/2-\lambda)n \geq (1/2 - 2\sqrt{\lambda})n,$$ 
as required. Similarly, $|B'| \geq (1/2 - 2\sqrt{\lambda})n$.
Finally, for item 4, note, for example, that each vertex in $A' = A \setminus X$ has red degree at least $(1-\sqrt{\lambda})|A| - |X| \geq (1-3\sqrt{\lambda})|A|$ in $A'$, while each vertex in $A'$ has blue degree at least $(1-\sqrt{\lambda})|B| - |Y| \geq (1-3\sqrt{\lambda})|B|$ in $B'$. 
\end{proof}

The following claim allows us to assume that all the edges in $A'$ and $B'$ are red, i.e., that $A'$ and $B'$ are both red cliques, as otherwise we would be done. 

\begin{claim}\label{claim:red}
If there is a blue edge within $A'$ or $B'$, then the number of blue cycles of length $k$  with $k = (n+1)/2$ is at least $(n/5)^{k-2}.$
\end{claim}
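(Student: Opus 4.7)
The plan is to apply Claim~\ref{claim:redgeneral} directly to the blue subgraph $F$ of $K_n$, exploiting the near-complete blue bipartite structure between $A'$ and $B'$. Without loss of generality, suppose the blue edge lies within $A'$; then I will take $S = A'$ and $T = B'$, so that this blue edge is precisely an edge within $S$ in the graph $F$.

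The key input for Claim~\ref{claim:redgeneral} is a lower bound on the number $s$ of common blue neighbors in $B'$ shared by any two vertices of $A'$. By item 4 of Claim~\ref{claim:updateEC}, each vertex in $A'$ has blue degree at least $(1-3\sqrt{\lambda})|B|$ inside $B'$, i.e., has at most $3\sqrt{\lambda}|B|$ non-blue neighbors in $B'$. Hence any two vertices of $A'$ share at least
\[
|B'| - 6\sqrt{\lambda}|B| \;\geq\; (1-8\sqrt{\lambda})|B| \;\geq\; (1/2 - 5\sqrt{\lambda})n
\]
common blue neighbors in $B'$, so we may take $s \geq (1/2 - 5\sqrt{\lambda})n$ in Claim~\ref{claim:redgeneral}.

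With $l = k = (n+1)/2$, which is odd, the conditions $l \leq 2s+1$ and $l \leq 2|A'|-1$ of Claim~\ref{claim:redgeneral} follow immediately from the size bounds $|A'| \geq (1/2-2\sqrt{\lambda})n$ and $s \geq (1/2-5\sqrt{\lambda})n$, since $\lambda = 300\sqrt{\alpha}$ with $\alpha = 20\sqrt{\epsilon}$ and $\epsilon = 10^{-30}$ is extremely small. Plugging into Claim~\ref{claim:redgeneral}, the number of blue cycles of length $k$ is at least
\[
\bigl(s - k/2 + 3/2\bigr)^{(k-1)/2}\bigl(|A'| - (k-1)/2\bigr)^{(k-3)/2}.
\]
Since $k/2 \approx n/4$, both base factors are at least $n/4 - O(\sqrt{\lambda}n)$, which exceeds $n/5$ for $\lambda$ small enough. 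Combining the two factors yields at least $(n/5)^{(k-1)/2 + (k-3)/2} = (n/5)^{k-2}$ blue cycles of length $k$, as required.

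The main step is essentially bookkeeping: verifying that the two base factors in Claim~\ref{claim:redgeneral} clear the threshold $n/5$. The conceptual heart of the argument is that the blue edge within $A'$ supplies the odd-parity bridge required by Claim~\ref{claim:redgeneral} to turn the almost-complete blue bipartite graph between $A'$ and $B'$ — which on its own would yield only even cycles — into many odd blue cycles of the correct length $k$.
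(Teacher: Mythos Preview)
Your proof is correct and follows essentially the same approach as the paper: apply Claim~\ref{claim:redgeneral} to the blue subgraph with $S = A'$, $T = B'$, $l = k$, using the degree bounds from Claim~\ref{claim:updateEC} to derive a common-neighbor lower bound $s$ of order $(1/2 - O(\sqrt{\lambda}))n$, then check that both base factors exceed $n/5$. The only cosmetic difference is that you bound the common blue neighborhood via non-neighbors ($|B'| - 6\sqrt{\lambda}|B|$), whereas the paper uses $2(1-3\sqrt{\lambda})|B| - |B'|$; the resulting constants and the rest of the argument are effectively identical.
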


\begin{proof}
Without loss of generality, suppose that there is a blue edge $(u,u')$ in $A'$. 
We will apply Claim \ref{claim:redgeneral} to the blue subgraph with $S = A'$, $T = B'$, and $l = k$. Since, by Claim \ref{claim:updateEC}, every vertex in $A'$ has at least $(1-3\sqrt{\lambda})|B|$ blue neighbors in $B'$, the size of the common blue neighborhood in $B'$ of any two vertices in $A'$ is at least \[2(1-3\sqrt{\lambda})|B| - |B'| \geq (1-6\sqrt{\lambda})|B'|.\]
Thus, again in reference to Claim \ref{claim:redgeneral}, we may take $s = (1-6\sqrt{\lambda})|B'|$. 

It remains to verify that the conditions of Claim \ref{claim:redgeneral} hold, that is, that $k \leq \min(2(1-6\sqrt{\lambda})|B'|+1, 2|A'|-1)$. 
But this is simple, since
\beq (1-6\sqrt{\lambda})|B'| - (k-1)/2 \geq (1-6\sqrt{\lambda})(1/2 - 2\sqrt{\lambda})n - n/4 > (1/4 - 5\sqrt{\lambda})n \label{eq:eq1}
\eeq
 and
 \beq |A'| - (k+1)/2 \geq (1/2 - 2\sqrt{\lambda})n - (k+1)/2 \geq  (1/4 - 5\sqrt{\lambda})n.\label{eq:eq2}
 \eeq 
Therefore, by Claim \ref{claim:redgeneral} and the estimates \eqref{eq:eq1} and \eqref{eq:eq2},  the number of cycles of length $k$ is at least
\[
((1/4 - 5\sqrt{\lambda})n)^{(k-1)/2} \cdot ((1/4 - 5\sqrt{\lambda})n)^{(k-3)/2}  = ((1/4 - 5\sqrt{\lambda})n)^{k-2} \geq (n/5)^{k-2}, 
\]
as required.
\end{proof}

Our next claim is as follows. 

\begin{claim}\label{claim:ABedge}
Suppose that $A'$ and $B'$ are both red cliques. If there are two vertex-disjoint red paths $P_1$ and $P_2$ such that each has  length at most $2$ and each has one end vertex in $A'$ and the other in $B'$, then there are at least $(n/5e)^{k-6}$ red cycles of length $k$. 
\end{claim}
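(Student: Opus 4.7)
The plan is to deduce Claim~\ref{claim:ABedge} directly from the general Claim~\ref{claim:ABedgegeneral}, applied to the red subgraph with $S = A'$, $T = B'$, and $l = k$. The key insight is that all of the hypotheses of Claim~\ref{claim:ABedgegeneral} are already present in the statement of Claim~\ref{claim:ABedge}, so the only work is checking numerical conditions.

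First, I would verify the structural hypotheses. By assumption, $A'$ and $B'$ are disjoint red cliques, and we are given two vertex-disjoint red paths $P_1$ and $P_2$ each of length at most $2$, with one endpoint in $A'$, the other in $B'$, and any interior vertex necessarily lying in $X \cup Y$, which is outside $S \cup T = A' \cup B'$. Thus the two-paths condition of Claim~\ref{claim:ABedgegeneral} is satisfied.

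Next, I would verify the size condition $7 \leq k \leq \min(2|A'|-1, 2|B'|-1)$. Here $k$ is sufficiently large, so $k \geq 7$. By part 3 of Claim~\ref{claim:updateEC}, $|A'|, |B'| \geq (1/2 - 2\sqrt{\lambda})n$, so
\[
2|A'| - 1,\ 2|B'| - 1 \geq (1 - 4\sqrt{\lambda})n - 1 = (1 - 4\sqrt{\lambda})(2k-1) - 1 \geq k
\]
for $k$ sufficiently large (recalling that $\lambda = 300\sqrt{\alpha}$ with $\alpha = 20\sqrt{\epsilon}$ and $\epsilon = 10^{-30}$, so $\lambda$ is a tiny constant).

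With the hypotheses verified, Claim~\ref{claim:ABedgegeneral} yields at least $((k-7)/(2e))^{k-6}$ red cycles of length $k$. The final step is the numerical comparison: since $n = 2k-1$, one has $(k-7)/2 \geq (2k-1)/5 = n/5$ whenever $k \geq 33$, so in particular for $k$ sufficiently large. This gives the desired bound of at least $(n/5e)^{k-6}$ red cycles of length $k$. Since the entire argument is a direct application plus routine size-checking, no real obstacle is anticipated; the only subtlety is keeping track of which constants are absorbed into ``$k$ sufficiently large,'' but all are controlled by the fixed constants $\epsilon$ and $\lambda$.
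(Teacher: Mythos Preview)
Your proof is correct and essentially identical to the paper's: both apply Claim~\ref{claim:ABedgegeneral} to the red subgraph with $S=A'$, $T=B'$, $l=k$, check $k\le\min(2|A'|-1,2|B'|-1)$ via $|A'|,|B'|\ge(1/2-2\sqrt{\lambda})n$, and then simplify $(((k-1)/2-3)/e)^{k-6}=((k-7)/2e)^{k-6}$ to at least $(n/5e)^{k-6}$ for large $n$. One small quibble: the statement of Claim~\ref{claim:ABedge} does not literally force an interior vertex of $P_i$ to lie in $X\cup Y$ as you assert, but this is harmless since any interior vertex in $A'\cup B'$ would let you shorten that path to length~$1$ while preserving vertex-disjointness; the paper's proof glosses over this point entirely.
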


\begin{proof}
We will apply Claim \ref{claim:ABedgegeneral} to the red subgraph with $S = A'$, $T = B'$, and $l = k$. 
To check that the condition $7 \leq k \leq \min(2|A'| -1, 2|B'|-1)$ holds, note that
$$2|A'|-1 \geq (1 - 4\sqrt{\lambda})n-1 > (n+1)/2 = k$$ 
for $n$ sufficiently large and, similarly, $k \leq 2|B'|-1$. Therefore, we may apply Claim~\ref{claim:ABedgegeneral} to conclude that the number of red cycles of length $k$ is at least 
$$(((k-1)/2-3)/e)^{k-6}=\left( ((n-1)/4 - 3)/e\right)^{k-6} \geq (n/5e)^{k-6}$$ 
for $n$ sufficiently large.
\end{proof}

Therefore, 
we are done if the assumptions of Claim \ref{claim:ABedge} are satisfied, so we can and will assume that if $A'$ and $B'$ are both red cliques, then there are no two vertex-disjoint red edges between $A'$ and $B'$. 
By applying Claim \ref{claim:tworedABgeneral} to the red subgraph with $S = A'$ and $T = B'$, we see that we may remove at most one vertex from $A' \cup B'$ to make all the edges between $A'$ and $B'$ blue. Without loss of generality, we may therefore assume that there is a vertex $v$ in $A'$ such that all the edges between $A' \setminus \{v\}$ and $B'$ are blue. In what follows, we let $A'' = A' \setminus \{v\}$. 

\begin{claim}\label{claim:PbothAB}
Suppose that all the edges between $A''$ and $B'$ are blue. If there is a blue path $P'$ of length two with one end vertex in $A''$ and the other in $B'$, then there are at least $(n/8e)^{k-5}$ blue cycles of length $k$. 
\end{claim}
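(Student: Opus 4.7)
The plan is to apply Claim~\ref{claim:PbothABgeneral} directly to the blue subgraph, taking $S = A''$, $T = B'$, and $l = k$. The distinguished vertex $w$ required by the statement of Claim~\ref{claim:PbothABgeneral} plays a trivial role here and can be chosen to be any fixed vertex of $A''$, since by assumption the entire bipartite graph between $A''$ and $B'$ is already blue.

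To check the hypotheses of Claim~\ref{claim:PbothABgeneral}, first note that $k$ is odd by assumption and $k \geq 7$ holds for $k$ large. For the size condition $7 \leq k \leq \min(2|A''|+1,\, 2|B'|+1)$, Claim~\ref{claim:updateEC} yields $|B'| \geq (1/2 - 2\sqrt{\lambda})n$ and $|A''| \geq |A'| - 1 \geq (1/2 - 2\sqrt{\lambda})n - 1$, so both $2|A''|+1$ and $2|B'|+1$ exceed $(n+1)/2 = k$ once $n$ is sufficiently large in terms of $\lambda$. For the path condition, recall that we are in the situation where Claim~\ref{claim:red} allows us to assume that both $A'$ and $B'$ are red cliques. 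Hence any blue edge incident to a vertex of $A''$ or $B'$ must either cross between $A''$ and $B'$ (from the guaranteed blue complete bipartite graph) or else leave $A' \cup B'$ altogether. In particular, the interior vertex of the given blue path $P'$ from $A''$ to $B'$ cannot lie in $A' \cup B'$, and so certainly lies outside $A'' \cup B'$, exactly as required by the hypothesis of Claim~\ref{claim:PbothABgeneral}.

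Applying Claim~\ref{claim:PbothABgeneral} then produces at least $((k-5)/2e)^{k-5}$ blue cycles of length $k$. Since $k = (n+1)/2$, we have $(k-5)/2 = (n-9)/4 \geq n/8$ once $n$ is sufficiently large, which immediately gives the desired lower bound of $(n/8e)^{k-5}$.

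The entire argument is essentially a verification that the abstract setup of Claim~\ref{claim:PbothABgeneral} is satisfied, so there is no substantive obstacle to overcome. The one point that requires a moment of thought is the claim that the interior vertex of $P'$ must avoid $A'' \cup B'$; this is precisely the place where the red-clique structure of $A'$ and $B'$ coming out of Claims~\ref{claim:updateEC} and~\ref{claim:red} is used to rule out the degenerate case.
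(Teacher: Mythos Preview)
Your proof is correct and follows essentially the same approach as the paper: apply Claim~\ref{claim:PbothABgeneral} with $S=A''$, $T=B'$, $l=k$, verify the size bounds using Claim~\ref{claim:updateEC}, and simplify $((k-5)/2e)^{k-5}$ to $(n/8e)^{k-5}$. Your additional verification that the interior vertex of $P'$ must lie outside $A''\cup B'$ (using that $A'$ and $B'$ are red cliques) is a point the paper leaves implicit, so including it is an improvement rather than a deviation.
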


\begin{proof}
We will apply Claim \ref{claim:PbothABgeneral} to the blue subgraph with $S = A''$, $T = B'$, and $l = k$, using the fact that the bipartite graph between $A''$ and $B'$ is complete in blue. To check that the condition $7 \leq k \leq \min(2|A''|+1, 2|B'|+1)$, note, for instance, that 
$$2|A''| + 1 \geq 2|A'| - 1 \geq (1 - 4 \sqrt{\lambda})n -1 \geq (n+1)/2 = k$$ 
for $n$ sufficiently large. Therefore, by Claim~\ref{claim:PbothABgeneral}, the number of blue cycles of length $k$ is at least 
$((k-5)/2e)^{k-5} \geq  (n/8e)^{k-5}$, as required.
\end{proof}

Since we are done if the assumptions of Claim \ref{claim:PbothAB} hold, we can now assume that there is no blue path $P'$ of length two with one end vertex in $A''$ and the other in $B'$. This means that any vertex in $\{v\} \cup X \cup Y$ is either completely red to $A''$ or completely red to $B'$. Therefore, there is a vertex partition of $\{v\} \cup X \cup Y$ into two sets  $Z_1 \cup Z_2$ such that each vertex in $Z_1$ is completely red to $A''$ and each vertex in $Z_2$ is completely red to $B'$.  

By another application of Claim \ref{claim:ABedge}, we can also assume that there are no two vertex-disjoint red paths of length at most two each with one end vertex in $A''$ and the other in $B'$. Therefore, if $Z_1$ and $Z_2$ are both non-empty, either $Z_1$ is completely blue to $B'$ or $Z_2$  is completely blue to $A''$. Without loss of generality, suppose that $Z_1$ is completely blue to $B'$. If now $|Z_2| \geq 1$, either there is at most one vertex in $Z_2$ with red neighbors in $A''$ or there is a vertex $a\in A''$ such that this is the only red neighbor of vertices in $Z_2$. Therefore, by removing at most one vertex from $V(G)$, it will also be completely blue between $Z_2$ and $A''$. 

In summary, we have disjoint sets $Z_1' \subset Z_1$, $Z_2' \subset Z_2$, $A''' \subset A''$, and $B'' \subset B'$ (at most one of which differs from its superset) such that $|Z_1' \cup Z_2'\cup A''' \cup B''| \geq n-1$ and the following conditions hold:
\begin{enumerate}
\item $Z_1'$ is completely red to $A'''$ and completely blue to $B''$.
\item $Z_2'$ is completely blue to $A'''$ and completely red to $B''$.
\item It is completely blue between $A'''$ and $B''$. 
\item $A'''$ and $B''$ are both red  cliques.
\end{enumerate}

Let $\tilde A = A''' \cup Z_1'$ and $\tilde B = B'' \cup Z_2'$. Then it is completely blue between $\tilde A$ and $B''$ and between $\tilde B$ and $A'''$. 
Furthermore, 
\[
|\tilde A| \geq |A'''| \geq |A'| - 2 \geq (1/2 -2 \sqrt{\lambda})n - 2 > (1/2 - 3 \sqrt{\lambda})n
\]
and, similarly, $|\tilde B| \geq (1/2 - 3\sqrt{\lambda})n$. Finally,  
\begin{equation}
|\tilde A| + |\tilde B| \geq n-1 = 2k-2.  \label{eqn:almost}
\end{equation}

By following the proofs of Claims~\ref{claim:red} and~\ref{claim:ABedge}, we obtain the following two results.

\begin{claim}\label{claim:tildeABblue}
If there is a blue edge within either $\tilde A$ or $\tilde B$, then there are at least $(n/5)^{k-2}$ blue cycles of length $k$.
\end{claim}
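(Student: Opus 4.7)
The plan is to mimic the proof of Claim~\ref{claim:red} almost verbatim, applied now to the blue subgraph with the pair $(\tilde A, \tilde B)$ in place of $(A',B')$. Without loss of generality I assume the blue edge lies inside $\tilde A$; the case of $\tilde B$ is symmetric (swap the roles of the two sides). The key step is to apply Claim~\ref{claim:redgeneral} to the blue subgraph with $S=\tilde A$ and, crucially, $T=B''$ rather than the full $\tilde B$.

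The reason for taking $T=B''$ is that the blue bipartite graph between $\tilde A = A'''\cup Z_1'$ and $\tilde B = B''\cup Z_2'$ is complete on the pairs $A'''$--$B''$, $A'''$--$Z_2'$, and $Z_1'$--$B''$, but the edges between $Z_1'$ and $Z_2'$ are uncontrolled. On the other hand, $\tilde A$ is completely blue to $B''$ by the listed conditions, so any two vertices of $\tilde A$ share all of $B''$ as their common blue neighborhood, giving $s=|B''|$ in the notation of Claim~\ref{claim:redgeneral}. Using $|B''|\geq |B'|-2 \geq (1/2-3\sqrt{\lambda})n$ and $|\tilde A|\geq (1/2-3\sqrt{\lambda})n$, the hypothesis $k\leq \min(2s+1, 2|\tilde A|-1)$ holds for $\lambda$ small (which it is for $\epsilon=10^{-30}$), and the estimates
\[
s - k/2 + 3/2 \geq (1/4 - 4\sqrt{\lambda})n \quad\text{and}\quad |\tilde A| - (k-1)/2 \geq (1/4 - 4\sqrt{\lambda})n
\]
plug into Claim~\ref{claim:redgeneral} to yield at least $\bigl((1/4 - 4\sqrt{\lambda})n\bigr)^{k-2}$ blue cycles of length $k$, which comfortably exceeds $(n/5)^{k-2}$.

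There is no real obstacle here: the argument is essentially a reprise of Claim~\ref{claim:red}, with the only subtle point being the choice of $B''$ (rather than $\tilde B$) as the target set, which is needed to ensure a \emph{fully} blue bipartite connection from $\tilde A$. The extra $O(\sqrt{\lambda})n$ shrinkage incurred by passing from $(A',B')$ to $(\tilde A,B'')$ is harmless, since we only need the bound $(n/5)^{k-2}$ and not anything close to $(n/4)^{k-2}$.
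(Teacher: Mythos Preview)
Your proposal is correct and is precisely the argument the paper has in mind when it writes ``By following the proofs of Claims~\ref{claim:red} and~\ref{claim:ABedge}.'' You correctly identify the one point that needs care, namely that one should take $T=B''$ (respectively $T=A'''$ in the symmetric case) rather than $\tilde B$, since the blue bipartite completeness is only asserted between $\tilde A$ and $B''$; with that choice, Claim~\ref{claim:redgeneral} applies with $s=|B''|$ and the numerics go through exactly as you indicate.
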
 


\begin{claim} \label{claim:tildeABred}
Suppose that $\tilde A$ and $\tilde B$ are both red cliques, each with $k-1$ vertices. If there are two vertex-disjoint red edges between $\tilde A$ and $\tilde B$, then there are at least $(n/5e)^{k-6}$ red cycles of length $k$. 
\end{claim}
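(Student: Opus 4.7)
The plan is to invoke Claim~\ref{claim:ABedgegeneral} applied to the red subgraph, with $S = \tilde A$, $T = \tilde B$, and $l = k$. The two vertex-disjoint red edges guaranteed by the hypothesis will play the role of the two paths $P_1$ and $P_2$ demanded by Claim~\ref{claim:ABedgegeneral}: each is a red path of length $1$, satisfies the bound $l_i \leq 2$, has one endpoint in $\tilde A$ and the other in $\tilde B$, and has no interior vertices, so the condition that interior vertices lie outside $\tilde A \cup \tilde B$ holds vacuously. The vertex-disjointness of the two edges is exactly the vertex-disjointness of $P_1$ and $P_2$.

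Next I would verify the numerical hypothesis $7 \leq k \leq \min(2|\tilde A|-1,\, 2|\tilde B|-1)$. Since $|\tilde A| = |\tilde B| = k-1$, we have $\min(2|\tilde A|-1,\, 2|\tilde B|-1) = 2k-3 \geq k$ for $k \geq 3$, and $k \geq 7$ since $k$ is taken sufficiently large. No additional case analysis or regularity input is needed.

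Having verified the hypotheses, Claim~\ref{claim:ABedgegeneral} yields at least
\[
\bigl(((k-1)/2 - 3)/e\bigr)^{k-6} \;=\; \bigl((k-7)/(2e)\bigr)^{k-6}
\]
red cycles of length $k$. Recalling that $n = 2k-1$, a routine inequality gives $(k-7)/2 \geq n/5$ whenever $k$ is sufficiently large (e.g.\ $k \geq 33$), so the count is at least $(n/(5e))^{k-6}$, as required.

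The main point to notice is that there is no real obstacle: Claim~\ref{claim:tildeABred} is a direct restatement of Claim~\ref{claim:ABedge} in the cleaner post-processing setting in which $\tilde A, \tilde B$ are now red cliques of size exactly $k-1$, and in which the hypothesis of two vertex-disjoint red edges (rather than two vertex-disjoint red paths of length at most $2$) is a strict specialization of what Claim~\ref{claim:ABedgegeneral} already handles. The proof is therefore essentially identical to that of Claim~\ref{claim:ABedge}, with the trivial simplification that $l_1 = l_2 = 1$ throughout.
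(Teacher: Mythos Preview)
Your proposal is correct and follows essentially the same approach as the paper, which explicitly states that Claim~\ref{claim:tildeABred} is obtained ``by following the proof of Claim~\ref{claim:ABedge}.'' Your verification of the numerical hypothesis using $|\tilde A|=|\tilde B|=k-1$ and your final inequality $(k-7)/2 \geq n/5$ for large $k$ are exactly the right specializations.
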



By Claim \ref{claim:tildeABblue}, 
we can assume that there is no blue edge within $\tilde A$ or $\tilde B$. That is, $\tilde A$ and $\tilde B$ are both red cliques. If either of these cliques has order at least $k$ we are done, as we then get at least $(k-1)!/2 \geq ((k-1)/2e)^{k-1}$ red cycles of length $k$. Hence, by (\ref{eqn:almost}), we can assume that $|\tilde A| = |\tilde B| = k-1$. 

By Claim \ref{claim:tildeABred}, 
we can assume that there are no two vertex-disjoint red edges between $\tilde A$ and $\tilde B$. Therefore, applying Claim \ref{claim:tworedABgeneral} to the red subgraph with $S = \tilde A$ and $T = \tilde B$, we see that by removing at most one vertex, say $w$, all the edges between $\tilde A$ and $\tilde B$ are blue. Without loss of generality, we will assume that $w \in \tilde A$, noting that $w$ must have at least one blue neighbor in $\tilde B$, since otherwise $\tilde B \cup \{w\}$ would be a red clique of order $k$, again completing the proof. 

We require one final observation, proved in the same manner as Claim~\ref{claim:PbothAB}.

\begin{claim} \label{claim:tildePbothAB}
Suppose that $w \in \tilde A$ and all the edges between $\tilde A \setminus \{w\}$ and $\tilde B$ are blue, while at least one edge between $w$ and $\tilde B$ is blue. If there is a blue path $P'$ of length two with one end vertex in $\tilde A$ and the other in $\tilde B$, then there are at least $(n/8e)^{k-5}$ blue cycles of length $k$. 
\end{claim}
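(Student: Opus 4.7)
The plan is to reduce Claim~\ref{claim:tildePbothAB} directly to Claim~\ref{claim:PbothABgeneral} applied to the blue subgraph, taking $S = \tilde A$, $T = \tilde B$, the distinguished vertex being the given $w \in \tilde A$, and $l = k$. The hypotheses of Claim~\ref{claim:PbothABgeneral} line up with those of Claim~\ref{claim:tildePbothAB} essentially verbatim: by assumption the bipartite graph between $\tilde A \setminus \{w\}$ and $\tilde B$ is entirely blue, the vertex $w$ has at least one blue neighbor in $\tilde B$, and the given blue path $P'$ has precisely the prescribed form (one endpoint in $\tilde A$, the other in $\tilde B$, and its interior vertex outside $\tilde A \cup \tilde B$).

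The only real bookkeeping step is to verify the numerical range condition $7 \leq k \leq \min(2|\tilde A|+1, 2|\tilde B|+1)$. Since by the discussion preceding the claim we have $|\tilde A| = |\tilde B| = k-1$, the upper bound equals $2k-1$, which exceeds $k$ trivially, and the lower bound $k \geq 7$ is automatic once $k$ is sufficiently large. With these hypotheses in place, Claim~\ref{claim:PbothABgeneral} yields at least $((k-5)/2e)^{k-5}$ blue cycles of length $k$.

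The final step is to convert this estimate into the advertised bound $(n/8e)^{k-5}$. Using $n = 2k-1$, the comparison $(k-5)/2 \geq (2k-1)/8$ is equivalent to $4k - 20 \geq 2k - 1$, i.e. $k \geq 10$, which holds for $k$ sufficiently large. Consequently $((k-5)/2e)^{k-5} \geq (n/8e)^{k-5}$, completing the reduction.

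There is no substantive obstacle: Claim~\ref{claim:tildePbothAB} is a direct instance of Claim~\ref{claim:PbothABgeneral}, and the proof mirrors that of Claim~\ref{claim:PbothAB} with only the labels $A''$, $B'$ replaced by $\tilde A$, $\tilde B$ and the size inputs updated. I would therefore expect the write-up to be just two short paragraphs: one identifying the hypotheses and invoking Claim~\ref{claim:PbothABgeneral}, and one verifying the elementary inequality $((k-5)/2e)^{k-5} \geq (n/8e)^{k-5}$ for $k$ large.
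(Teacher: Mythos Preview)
Your proposal is correct and follows exactly the route the paper indicates: the paper states that Claim~\ref{claim:tildePbothAB} is ``proved in the same manner as Claim~\ref{claim:PbothAB}'', i.e., by a direct application of Claim~\ref{claim:PbothABgeneral} with $S=\tilde A$, $T=\tilde B$, the distinguished vertex $w$, and $l=k$, followed by the same final inequality $((k-5)/2e)^{k-5}\ge (n/8e)^{k-5}$. Your verification of the range condition using $|\tilde A|=|\tilde B|=k-1$ (established just before the claim) and of the numerical inequality for $k\ge 10$ is accurate.
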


Suppose that $u$ is the single vertex of $K_n$ which is not in $\tilde A \cup \tilde B$. If $u$ has a blue neighbor in both $\tilde A$ and $\tilde B$, then Claim \ref{claim:tildePbothAB} implies that we are done. Therefore, we can assume that $u$ is completely red to either $\tilde A$ or $\tilde B$. If $u$ is completely red to $\tilde A$, then $\tilde A \cup \{ u\}$ is a red clique with $k$ vertices, in which case there are at least $(k-1)!/2$ red cycles of length $k$. Since this is also true if $u$ is completely red to $\tilde B$, this completes the proof.
\end{proof}

\subsection{Proof of Lemma \ref{lem:main2}} \label{sec:stab}

The following stability lemma of Nikiforov and Schelp \cite{NS} is an essential ingredient in our proof. 

\begin{lemma}[Theorem 13, \cite{NS}]\label{lem:NS}
Let $0 < \alpha < 5 \cdot 10^{-6}$, $0 \leq \beta \leq \alpha/25$, and $n \geq \alpha^{-1}$.
If $G$ is a graph with $n$ vertices and $e(G) > (1/4 - \beta) n^2$, then one of the following holds:
\begin{enumerate}
\item There are cycles $C_t \subset G$ for every $t \in [3, \lceil(1/2 + \alpha) n\rceil]$.
\item There exists a partition $V(G) = U_0 \cup U_1 \cup U_2$ such that
\[|U_0| < 2000\alpha n,\]
\[ \left( 1/2 - 10\sqrt{\alpha+\beta}\right) n < |U_1| \leq |U_2| <  \left( 1/2 + 10\sqrt{\alpha+\beta}\right) n,\]
and the induced subgraph $G- U_0$ on vertex set $V(G) \setminus U_0$ is a subgraph of either the complete bipartite graph between $U_1$ and $U_2$ or its complement. 
\end{enumerate}
\end{lemma}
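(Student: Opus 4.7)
The plan is to proceed by contraposition: assume $G$ satisfies the edge bound $e(G) > (1/4-\beta)n^2$, omits some cycle $C_t$ with $t \in [3, \lceil (1/2+\alpha)n \rceil]$, and then derive the partition asserted in conclusion 2. First I would secure a long cycle. The Erd\H{o}s--Gallai circumference theorem applied to $G$ immediately yields a cycle of length around $n/2$; a standard stability form of this bound (or the Kopylov/Woodall extremal results) shows that either $G$ contains a cycle $C = v_1 v_2 \cdots v_L v_1$ with $L \geq \lceil (1/2+\alpha)n \rceil$, or else $G$ is within $O(\sqrt{\alpha+\beta})\cdot n$ vertices of a disjoint union of two nearly-equal cliques. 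In the latter case we are already in the second alternative of conclusion~2 (the complement-of-bipartite case), so I may assume such a long cycle $C$ exists.

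Next I would run a chord / path-of-length-two analysis on $C$. For each chord $v_i v_j \in E(G)$ (and each length-two path $v_i u v_j$ with $u \notin V(C)$), cutting and rerouting $C$ produces shorter cycles whose lengths are determined by the gap $g(v_i v_j) := \min(|i-j|, L - |i-j|)$, with a shift of one when an external vertex is used. Since $|V(G)\setminus V(C)| \leq n/2$, the edge bound forces $e(G[V(C)]) \geq (1/4 - O(\beta))L^2$, supplying $\Omega(n^2)$ effective chords. The failure to realize some particular $C_t$ then forces a rigid restriction on the distribution of chord gaps: either a parity bias (almost all chord gaps have the same parity), or a positional bias (almost all chords sit within one half of $C$), or a concentration of forbidden gaps near a single value. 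A standard pigeonhole and supersaturation computation eliminates the third possibility once there are $\Omega(n^2)$ chords.

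In the last step I would translate each remaining bias into a global partition. The parity bias forces $V(C)$ to admit a bipartition into classes of size close to $L/2$ in which almost every edge of $G[V(C)]$ crosses; a direct count then classifies each vertex outside $V(C)$ by its majority adjacency to the two classes, placing the ill-behaved vertices into $U_0$ and the rest into $U_1, U_2$. This produces the first alternative of conclusion 2. The positional bias case is handled dually by running the same analysis on the complement $\overline{G}$, producing the second alternative. Quantitatively propagating the $O(\sqrt{\alpha+\beta})$ loss incurred in the chord-distribution step through these cleanings delivers the asserted bounds $|U_0| < 2000\alpha n$ and $(1/2 - 10\sqrt{\alpha+\beta})n < |U_1| \leq |U_2| < (1/2 + 10\sqrt{\alpha+\beta})n$.

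The main obstacle is quantitatively sharp: showing that the only obstructions to hitting every length in $[3, L]$ are the parity or positional biases above, with error terms as small as $10\sqrt{\alpha+\beta}$ rather than a cruder $\alpha^{1/4}$. P\'osa's rotation-extension technique naturally produces a spread of cycle lengths, but to extract the tight partition bounds I would need a Simonovits-style iterative stability argument: first prove a weak form of the partition with large error, then use the cleaned-up graph to re-run the chord count and tighten the bounds. The hypothesis constraints $\alpha < 5 \cdot 10^{-6}$ and $\beta \leq \alpha/25$ are exactly what is needed to ensure the error terms at each stage contract, so that a single iteration suffices to land in the stated bounds.
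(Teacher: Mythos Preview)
The paper does not prove this lemma. It is quoted verbatim as Theorem~13 of Nikiforov and Schelp~\cite{NS} and used as a black box in the proof of Lemma~\ref{lem:main2}; there is no argument in the present paper to compare your proposal against.

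That said, a brief comment on the proposal itself: your outline is in the right spirit for a stability result of this type (long cycle via Erd\H{o}s--Gallai/Kopylov, then a chord-gap analysis to force a near-bipartition or near-clique-union), but it is only a sketch, not a proof. The step where you assert that ``a standard pigeonhole and supersaturation computation eliminates the third possibility'' and the step where you claim the parity/positional dichotomy are exactly the hard parts of the Nikiforov--Schelp argument, and you have not supplied them. In particular, getting the constants $2000\alpha$ and $10\sqrt{\alpha+\beta}$ rather than something qualitatively weaker requires careful bookkeeping that your write-up defers entirely to phrases like ``quantitatively propagating the $O(\sqrt{\alpha+\beta})$ loss.'' If you actually need to prove this lemma rather than cite it, you should consult~\cite{NS} directly; for the purposes of the present paper, a citation suffices.
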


With this preliminary in place, we can begin the proof of Lemma \ref{lem:main2}. We apply the colored regularity lemma, Lemma~\ref{reglem-twocolor}, with parameters $\epsilon$ and $l = \lceil \epsilon^{-1} \rceil$ to the given red/blue coloring of $K_n$. This implies that there exist $n_0(\epsilon)$ and $M_0(\epsilon)$ such that, for any $n \geq n_0$, there is a regular partition of $K_n$ into $M$ parts $V_1, \dots, V_M$ with $\epsilon^{-1} \leq M \leq M_0$. We now consider a reduced graph $H$ with $M$ vertices $v_1, \dots, v_M$ corresponding to $V_1, \dots, V_M$, placing an edge between $v_i$ and $v_j$ if and only if the pair $(V_i, V_j)$ is $\epsilon$-regular. We then color the edge $(v_i, v_j)$ red if the density of red edges between $V_i$ and $V_j$ is at least $d = 12 \epsilon^{1/2}$ and we color an edge blue under the analogous condition with blue in place of red. By the regularity lemma, all but at most $\epsilon \binom{M}{2}$ pairs of distinct vertices of $H$ are edges and, since $d < 1/2$, all edges of $H$ are colored red, blue, or, perhaps, both red and blue.  We say an edge is red-only if it is colored in red and not blue, while blue-only is defined similarly. 

Let the subgraph of $H$ induced by edges containing the color red be $H_R$ and the subgraph induced by edges containing the color blue be $H_B$. Hence, 
\[ |E(H_R) | +  |E(H_B) |  \geq (1-\epsilon)\binom{M}{2} > (1 - 2\epsilon) M^2/2,\]
where we used that $M \geq \epsilon^{-1}$. Thus, without loss of generality, we can assume that
\[ |E(H_R) | >  (1-2\epsilon)M^2/4.\] 

We now apply Lemma~\ref{lem:NS} to $H_R$ with $\beta = \epsilon/2$ and $\alpha = 20 \sqrt{\epsilon}$. There are two cases: 

\paragraph{Case 1 of Lemma~\ref{lem:NS}.}
In this case, we can find a red cycle $C_t$ for every $t \in [3, \lceil(1/2 + \alpha) M\rceil]$. In particular, we can find an odd cycle $C_t$ with 
\[ (1/2 + \alpha) M \geq t > (1/2 + \alpha) M - 2.\]
But this means that there are disjoint vertex sets $V_{k_0}, \dots, V_{k_{t-1}}$ such that, for each $0 \leq i \leq t-1$, $|V_{k_i}| \geq  \lfloor n/M \rfloor$ and each pair $(V_{k_i}, V_{k_{i+1}})$ (with addition taken mod $t$) is $\epsilon$-regular in red with red density at least $12 \epsilon^{1/2}$.
Thus, we are in Case 1 of Lemma \ref{lem:main2}.

\paragraph{Case 2 of Lemma~\ref{lem:NS}.}
In this case, there exists a partition $V(H_R) = U_0 \cup U_1 \cup U_2$ such that $|U_0| < 2000\alpha M$ and
\begin{equation} 
\left( 1/2 - 10\sqrt{2\alpha}\right) M < |U_1| \leq |U_2| <  \left( 1/2 + 10\sqrt{2\alpha}\right) M.  \label{eqn:U1U2}
\end{equation}
Furthermore, the induced subgraph $H_R-U_0$ is a subgraph of the disjoint cliques on $U_1$ and $U_2$ or a subgraph of the complete bipartite graph between $U_1$ and $U_2$. We will assume that the induced subgraph $H_R-U_0$ is a subgraph of the graph consisting of disjoint cliques on $U_1$ and $U_2$. The other case, where all edges of $H_R - U_0$ are between $U_1$ and $U_2$, can be handled similarly. 

Thus, by assumption, any edges between $U_1$ and $U_2$ are blue-only. Moreover, since the number of non-adjacent pairs is at most $\epsilon \binom{M}{2}$, the number of blue-only edges between $U_1$ and $U_2$ is at least
\beq
|U_1||U_2| - \epsilon \binom{M}{2}. \label{eqn:U1U2blue}
\eeq
Let $U_1' \subset U_1$ be the set of vertices in $U_1$ that have blue degree at least $(1-\sqrt{\epsilon}) |U_2|$ in $U_2$. 
Suppose $|U_1 \setminus U_1'| = x |U_1|$. Then
\[
(1-\sqrt{\epsilon} )|U_2| x |U_1| + |U_2|(1-x)|U_1| \geq |U_1||U_2| - \epsilon M^2/2,
\]
which implies that $x \leq \sqrt{\epsilon}M^2 / (2|U_1||U_2|)$.
Since $|U_1|, |U_2| \geq  (1/2 - 10\sqrt{2\alpha}) M$, we have 
\[
 x \leq \sqrt{\epsilon}M^2 / (2|U_1||U_2|) \leq \sqrt{\epsilon}M^2 / (2  (1/2 - 10\sqrt{2\alpha})^2 M^2   )< \sqrt{\epsilon}(2 + 200\sqrt{\alpha}) < 3 \sqrt{\epsilon},
 \]
where we used that $\alpha < 5\cdot 10^{-6}$. Defining $U_2' \subset U_2$ analogously, we therefore have 
 \begin{equation}
 |U_1 \setminus U_1'| \leq 3\sqrt{\epsilon}|U_1|, \ \  |U_2 \setminus U_2'| \leq 3\sqrt{\epsilon}|U_2|.  \label{eq:U1U1'}
 \end{equation}
 Thus, each vertex in $U_1'$ has at least $(1-\sqrt{\epsilon})|U_2| - |U_2 \setminus U_2'| \geq (1-4\sqrt{\epsilon})|U_2|$ blue neighbors in $U_2'$ and, similarly, each vertex in $U_2'$ has at least $(1-4\sqrt{\epsilon})|U_1|$ blue neighbors in $U_1'$. 
 
 \begin{claim}\label{claim:U1U2red'}
If there is a blue edge within $U_1'$ or $U_2'$, then Case 1 of Lemma \ref{lem:main2} holds. 
 \end{claim}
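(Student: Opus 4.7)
The plan is to apply Claim \ref{claim:redgeneral} to the blue subgraph $H_B$ of the reduced graph to produce an odd cycle of the length prescribed in Case 1 of Lemma \ref{lem:main2}. Without loss of generality, assume the blue edge lies inside $U_1'$; the case of a blue edge inside $U_2'$ is handled symmetrically.

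Take $S = U_1'$ and $T = U_2'$ in Claim \ref{claim:redgeneral}, regarded as vertex sets of $H_B$. Each vertex of $U_1'$ has at least $(1-4\sqrt{\epsilon})|U_2|$ blue neighbors in $U_2'$, so by inclusion--exclusion any two vertices of $U_1'$ share a common blue neighborhood in $U_2'$ of size at least $s := 2(1-4\sqrt{\epsilon})|U_2| - |U_2'| \geq (1-8\sqrt{\epsilon})|U_2|$. The remaining hypothesis of the claim, that $S$ contains an edge, is exactly our assumption. Let $t$ be the odd integer with $(1/2 + \alpha) M \geq t > (1/2 + \alpha) M - 2$ and set $l = t$; we need $3 \leq t \leq \min(2s+1,\, 2|U_1'|-1)$. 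For $M$ large the lower bound is trivial, while for the upper bound we invoke \eqref{eqn:U1U2} and \eqref{eq:U1U1'} together with $\alpha = 20\sqrt{\epsilon}$: both $|U_2|$ and $|U_1'|$ are at least $(1/2 - O(\sqrt{\alpha}))M$, so both $2s+1$ and $2|U_1'|-1$ comfortably exceed $(1/2+\alpha) M \geq t$.

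With these hypotheses verified, Claim \ref{claim:redgeneral} produces a cycle of length $t$ in $H_B$, i.e., a sequence of distinct parts $V_{k_0},\ldots,V_{k_{t-1}}$ from the Szemer\'edi partition such that each consecutive pair $(V_{k_i},V_{k_{i+1}})$ (indices modulo $t$) is an $\epsilon$-regular edge of $H$ whose blue density is at least $d = 12\epsilon^{1/2} \geq 11\epsilon^{1/2}$; moreover $|V_{k_i}| \geq \lfloor n/M \rfloor$ by equitability. Hence Case 1 of Lemma \ref{lem:main2} holds with the color $\chi$ being blue. The argument is essentially structural, with the only delicate point being the routine numerical check that the required length $t$ falls inside the interval $[3, \min(2s+1,\, 2|U_1'|-1)]$ of Claim \ref{claim:redgeneral}; this in turn is driven entirely by the smallness of $\alpha$ and $\epsilon$ and the relation $\alpha = 20\sqrt{\epsilon}$.
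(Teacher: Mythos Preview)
Your proof is correct and follows essentially the same approach as the paper: apply Claim~\ref{claim:redgeneral} to $H_B$ with $S=U_1'$, $T=U_2'$, and $l=t$, using the degree condition to get the common-neighbor bound $s$, and then verify numerically via \eqref{eqn:U1U2} and \eqref{eq:U1U1'} that $t \leq \min(2s+1,2|U_1'|-1)$. Your write-up is slightly more explicit than the paper's in spelling out how the resulting blue $C_t$ in the reduced graph yields the data required by Case~1 of Lemma~\ref{lem:main2}.
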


 \begin{proof}
It will suffice to show that there is a blue cycle $C_t$ in $H$, where $t$ is the odd integer with $(1/2 + \alpha)M \geq t > (1/2+\alpha)M-2$. Suppose that there is a blue edge $(u,u')$ in $U_1'$. We will apply Claim \ref{claim:redgeneral} to $H_B$ with $(S, T, l)$ being $(U_1', U_2', t)$. Since each vertex in $U_1'$ has at least $ (1-4\sqrt{\epsilon})|U_2|$ blue neighbors in $U_2'$, any two vertices in $U_1'$ have blue common neighborhood in $U_2'$ of order at least 
 \[
 2(1-4\sqrt{\epsilon})|U_2| - |U_2'| \geq (1-8\sqrt{\epsilon})|U_2'|. 
 \]
 Thus, we can let $s$ in Claim \ref{claim:redgeneral} be $(1-8\sqrt{\epsilon})|U_2'|$. 
 To check that Claim \ref{claim:redgeneral} applies, we need to show that $(1/2 + \alpha)M \leq \min(2|U_1'| -1, 2s+1)$. 
 First, by (\ref{eqn:U1U2}), (\ref{eq:U1U1'}), and the fact that $M \geq \epsilon^{-1}$,
 \[
2|U_1'|-1 \geq  2 (1-3\sqrt{\epsilon})|U_1| -1\geq 2(1-3\sqrt{\epsilon})  (1/2 - 10\sqrt{2\alpha})M -1 > 0.6M >  (1/2+\alpha)M. 
 \]
 Similarly, 
  \[
2s+1 \geq  2(1-8\sqrt{\epsilon}) (1-3\sqrt{\epsilon})|U_2| \geq 2(1-11\sqrt{\epsilon})  (1/2 - 10\sqrt{2\alpha})M > 0.6M >  (1/2+\alpha)M. 
 \]
Thus, by Claim \ref{claim:redgeneral}, there is a cycle of length $t$ in $H_B$, as required. 
\end{proof}

We may therefore assume that there is no blue edge inside $U_1'$ or $U_2'$. That is, all the edges within $U_1'$ and $U_2'$ are red-only. We move the vertices in $U_0$ arbitrarily to $U_1$ and $U_2$ to obtain $\tilde U_1$ and $\tilde U_2$. Thus, $\tilde U_1 \cup \tilde U_2$ is a vertex partition of $V(H)$.
Let $X_1 \subset V(K_n)$ be the vertices in $K_n$ corresponding to $\tilde U_1$ in $H$ and let $X_2 \subset V(K_n)$ be the vertices corresponding to $\tilde U_2$. We will conclude the proof by showing that the partition $X_1 \cup X_2$ induces an extremal coloring. 

\begin{claim}
The vertex partition $X_1 \cup X_2$ induces an extremal coloring with parameter $\lambda$, where $\lambda = 300 \sqrt{\alpha}$.
\end{claim}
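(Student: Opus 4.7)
The plan is to verify the two conditions of Definition~\ref{def:ec2} for the partition $X_1 \cup X_2$ and the color red, namely (a) the size lower bounds $|X_i| \geq (1/2 - \lambda)n$ and (b) the density conditions: red density at least $1 - \lambda$ inside each $X_i$ and blue density at least $1 - \lambda$ between $X_1$ and $X_2$. Set $m = \lfloor n/M \rfloor$, so each $|V_j| \in \{m, m+1\}$.

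For the sizes, I would combine $|\tilde U_i| \geq |U_i| \geq (1/2 - 10\sqrt{2\alpha})M$ from \eqref{eqn:U1U2} with $|V_j| \geq m \geq n/M - 1$ to get $|X_i| \geq (1/2 - 10\sqrt{2\alpha})(n - M)$. Since $M \leq M_0(\epsilon)$ is bounded and $n$ is large while $\lambda = 300\sqrt{\alpha} \gg 10\sqrt{2\alpha}$, this easily exceeds $(1/2 - \lambda)n$.

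For the red density inside $X_1$, I would bound the blue edge count from above. Blue edges in $X_1$ fall into three categories: (i) edges lying inside a single cluster $V_j$ with $j \in \tilde U_1$, contributing at most $|\tilde U_1|\binom{m+1}{2}$; (ii) blue edges between $V_j, V_k$ with $\{j,k\} \subset U_1'$ a red-only edge of $H$, which have blue density at most $d = 12\sqrt{\epsilon}$ since the red density is $\geq 1-d$; and (iii) edges between clusters indexed by a pair that is either not $\epsilon$-regular or has at least one endpoint in $\tilde U_1 \setminus U_1'$. The non-$\epsilon$-regular pairs number at most $\epsilon\binom{M}{2}$, and by \eqref{eq:U1U1'} together with $|U_0| < 2000\alpha M$ one has $|\tilde U_1 \setminus U_1'| \leq |U_0| + |U_1 \setminus U_1'| = O(\alpha M + \sqrt{\epsilon}M)$, giving $O(\alpha M^2)$ pairs of type (iii). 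Dividing each contribution by $|X_1|^2 = \Theta(M^2 m^2)$ yields a total blue density of $O(1/M) + d + O(\alpha + \sqrt{\epsilon})$. Because $M \geq \epsilon^{-1}$, $d \leq \alpha$, and $\sqrt{\epsilon} \leq \alpha/20$, this is $O(\alpha) \leq \lambda$ for $\alpha$ sufficiently small. Hence the red density within $X_1$ is at least $1 - \lambda$, and the argument for $X_2$ is identical.

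For the blue density between $X_1$ and $X_2$, I argue symmetrically by bounding the red density. The key input is that $H_R - U_0$ is a subgraph of the disjoint cliques on $U_1$ and $U_2$, so any $H$-edge between $U_1$ and $U_2$ has red density strictly less than $d$ in the original coloring. The red edges of $K_n$ between $X_1$ and $X_2$ therefore come from: (i) $H$-edges between $U_1$ and $U_2$, contributing at most $d |V_j||V_k|$ per pair and at most $|U_1||U_2|$ pairs; (ii) non-$\epsilon$-regular cross pairs, at most $\epsilon\binom{M}{2}$ in number; and (iii) pairs with at least one endpoint in $U_0$, at most $2|U_0|M = O(\alpha M^2)$ in number. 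Dividing the sum by $|X_1||X_2| = \Theta(M^2 m^2)$ gives red density at most $d + O(\alpha)$, so the blue density is at least $1 - O(\alpha) \geq 1 - \lambda$.

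The main difficulty is pure bookkeeping: $\lambda = 300\sqrt{\alpha}$ has to absorb all the small terms simultaneously, so one must check that each of $d$, $\alpha$, $\sqrt{\epsilon}$, $1/M$, and $|U_0|/M$ is individually a small multiple of $\sqrt{\alpha}$. All of these in fact come out as $O(\alpha)$, which is $\sqrt{\alpha} \cdot \sqrt{\alpha}$ and hence comfortably smaller than $\lambda$ for small $\alpha$; once the constants are tracked, the claim follows.
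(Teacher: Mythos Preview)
Your proposal is correct and follows essentially the same approach as the paper: both rely on the structural facts that every $H$-edge inside $U_1'$ (and $U_2'$) is red-only, every $H$-edge between $U_1$ and $U_2$ is blue-only, and the exceptional contributions from $U_0$, $U_i\setminus U_i'$, irregular pairs, and intra-cluster edges are all $O(\alpha)$ in density. The only cosmetic difference is that the paper lower-bounds the number of good-color edges directly (using an upper bound on $|X_i|$ in the denominator), whereas you equivalently upper-bound the bad-color edges.
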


\begin{proof}
By Claim \ref{claim:U1U2red'}, any edge in $U_1'$ is red-only and at most $\epsilon \binom{M}{2}$ pairs of distinct vertices in $U_1'$ are non-adjacent. 
Moreover, for any red-only edge $(i,j)$ in $U_1'$, the red density between $V_i$ and $V_j$ is at least $1-d$, since otherwise $(i,j)$ would also be colored blue. Since $n/M - 1 < |V_i| \leq n/M+1$, the number of red edges in $X_1$ is at least
\[(1-d) (n/M-1)^2 \binom{|U_1'|}{2} - (n/M+1)^2 \epsilon \binom{M}{2}.\]
Note also, by (\ref{eqn:U1U2}), that
\begin{align*}
|X_1| & \leq |\tilde U_1| \cdot (n/M+1) \leq (|U_1|+|U_0|)(n/M+1) \leq (|U_1| + 2000\alpha M) (n/M+1) \nonumber \\
& \leq \left( 1/2 + 10\sqrt{2\alpha} + 2000
\alpha \right) M (n/M+1) 
\leq (1/2 + 20\sqrt{\alpha}) n.
\end{align*} 
Combining the two inequalities above with (\ref{eqn:U1U2}) and (\ref{eq:U1U1'}), 
we see that the red density in $X_1$ is at least 
\begin{align*}
\frac{(1-d) \binom{|U_1'|}{2} (n/M-1)^2 - (n/M+1)^2\epsilon M^2/2}{ |X_1|^2/2} 
& \geq  \frac{2(1-d) \binom{(1-3\sqrt{\epsilon})|U_1|}{2} (n/M-1)^2 - (n/M+1)^2\epsilon M^2}{  (1/2 + 20\sqrt{\alpha})^2 n^2   } \\
& \geq  1 - d - 200 \sqrt{\alpha} - 10\sqrt{\epsilon} > 1 - 300 \sqrt{\alpha}. 
\end{align*}
Similarly, $|X_2| \leq (1/2 + 20\sqrt{\alpha}) n$ and the red density in $X_2 \subset V(G)$ is at least $1 - 300 \sqrt{\alpha}$. 

It only remains to lower bound the blue density between $X_1$ and $X_2$. By (\ref{eqn:U1U2}) and (\ref{eqn:U1U2blue}), the number of blue edges between $X_1$ and $X_2$ is at least
\begin{align*}
(1-d)\left(|U_1||U_2| - \epsilon \binom{M}{2}\right) (n/M-1)^2 
& \geq (1-d)\left((1/2 - 20\sqrt{\alpha})^2 M^2 - \epsilon \binom{M}{2}\right) (n/M - 1)^2 \\ 
& >(1-d) (1/4 - 25 \sqrt{\alpha})n^2.
\end{align*}
Thus, by a similar computation to before, the blue density between $X_1$ and $X_2$ is at least
\[\frac{(1-d)(1/4 - 25 \sqrt{\alpha})n^2}{|X_1||X_2|} 
\geq \frac{(1-d)(1/4 - 25 \sqrt{\alpha})n^2}{n^2/4} >  1 - 300\sqrt{\alpha},\]
as required.
\end{proof}

\end{document}